\newtheorem{theorem}{Theorem}
\newtheorem{lemma}{Lemma}
\newtheorem{proposition}{Proposition}
\newtheorem{corollary}{Corollary}
\newtheorem{claim}[theorem]{Claim}
\theoremstyle{definition}
\newtheorem{definition}{Definition}
\newtheorem{example}{Example}
\newtheorem{problem}{Problem}
\newcommand{\C}{\mathcal C}
\newcommand{\M}{\mathcal M}
\newcommand{\I}{\mathcal I}
\newcommand{\F}{\mathcal F}
\newcommand{\A}{\mathcal A}
\newcommand{\cB}{\mathcal B}
\newcommand{\Tau}{\mathcal T}
\newcommand{\pr}{\mathrm{pr}}
\newcommand{\e}{\varepsilon}
\newcommand{\IR}{\mathbb R}
\newcommand{\IT}{\mathbb T}
\newcommand{\IZ}{\mathbb Z}
\newcommand{\IN}{\mathbb N}
\newcommand{\w}{\omega}
\newcommand{\Ra}{\Rightarrow}
\newcommand{\diam}{\mathrm{ diam}\,}
\newcommand{\cov}{\mathrm{cov}}
\newcommand{\Borel}{\mathcal{B}or}
\title[A large Borel linear subspace in the countable product of lines]{A Borel linear subspace of $\mathbb R^\w$ that cannot be covered by countably many closed Haar-meager  sets}
\author{Taras Banakh, Eliza Jab{\l}o\'{n}ska}
\address{T.~Banakh: Jan Kochanowski University in Kielce (Poland), and
Ivan Franko National University in Lviv (Ukraine)}
\email{t.o.banakh@gmail.com}
\address{E.~Jab{\l}o\'{n}ska: AGH University of Science and Technology, Faculty of Applied Mathematics, Mickiewicza 30, 30--059 Krak{\'o}w, Poland}
\email{elizajab@agh.edu.pl}
\subjclass{28C10; 46A03; 54C30; 54H05}
\begin{document}

\begin{abstract} We prove that the countable product of lines contains a Borel linear subspace $L\ne\IR^\w$ that cannot be covered by countably many closed Haar-meager sets. This example is applied to studying the interplay between various classes of ``large'' sets and Kuczma--Ger classes in the topological vector spaces $\IR^n$ for $n\le \w$. 
\end{abstract}
\maketitle

\section{Introduction} 

By the classical Steinhaus Theorem \cite{Stein}, for any Borel subset $A$ of positive Lebesgue measure on the real line, the difference $A-A=\{x-y:x,y\in A\}$ is a neighborhood of zero. In \cite{Weil} Weil extended this result of Steinhaus to all locally compact topological groups proving that for any Borel subset $A$ of positive Haar measure in a locally compact topological group $X$, the set $AA^{-1}=\{xy^{-1}:x,y\in A\}$ is a neighborhood of the identity of the group $X$. This result implies that any nonopen Borel subgroup of a locally compact topological group $X$ belongs to the $\sigma$-ideal $\mathcal N_X$ of subsets of Haar measure zero in $X$.

A Baire category analogue of the Steinhaus--Weil Theorem was obtained by Ostrowski \cite{Ostrowski}, Piccard \cite{Piccard} and Pettis \cite{Pettis} who proved that for any nonmeager Borel subset $A$ of a Polish topological group $X$, the set $AA^{-1}$ is a neighborhood of the identity. This implies that any nonopen Borel subgroup in a Polish group $X$ is meager in $X$ and hence belongs to the $\sigma$-ideal $\mathcal M_X$ of meager subsets of $X$.

Therefore, any nonopen Borel subgroup of a locally compact Polish group $X$ belongs to the $\sigma$-ideal $\mathcal M_X\cap\mathcal N_X$ of meager subsets of Haar measure zero in $X$. For the real line this result was essentially improved by Laczkovich \cite{La} who proved that every nonopen analytic subgroup of the real line can be covered by countably many closed sets of Lebesgue measure zero. In \cite{BBJ} this result of Laczkovich was generalized as follows.

\begin{theorem}\label{t:BBJ} Any nonopen analytic subgroup of a locally compact topological group $X$ can be covered by countably many closed sets of Haar measure zero in $X$.
\end{theorem}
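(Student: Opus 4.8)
The plan is to reduce to a convenient special case, extract from ``$H$ nonopen'' the two smallness properties (meagerness and Haar-nullity), and then upgrade these to coverability by countably many closed Haar-null sets. Write $\mu$ for a left Haar measure on $X$ and let $\mathcal E$ denote the $\sigma$-ideal of subsets of $X$ coverable by countably many closed $\mu$-null sets; the claim is that $H\in\mathcal E$. First I would dispose of the trivial case: if $\overline H$ is Haar-null then $\overline H$ is a single closed Haar-null set containing $H$ and we are done, so assume $\mu(\overline H)>0$. Then, by the Weil form of the Steinhaus theorem applied to the closed (hence $\mu$-measurable) set $\overline H$, the set $\overline H\,\overline H^{-1}=\overline H$ is a neighbourhood of the identity, so $\overline H$ is a clopen subgroup; replacing $X$ by $\overline H$ (closed Haar-null subsets of the clopen subgroup $\overline H$ stay closed and Haar-null in $X$) we may assume $H$ is dense in $X$. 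Since an analytic set is separable, $H$ lies in the $\sigma$-compact open subgroup generated by a countable dense subset of $H$ together with a compact neighbourhood of the identity, so we may assume $X$ is $\sigma$-compact (with $H$ still dense and nonopen), and then, via the Kakutani--Kodaira theorem, that $X$ is a locally compact Polish group. (Here one passes to the quotient $X/K$ by a suitable small compact normal subgroup $K$ and uses that preimages of closed Haar-null sets are closed and Haar-null, checking that the hypotheses on $H$ are preserved.) Finally, since $H$ is analytic it has the Baire property and is universally measurable, and as $HH^{-1}=H$ is not a neighbourhood of the identity, the Piccard--Pettis theorem gives that $H$ is meager in $X$ and the Weil theorem gives $\mu(H)=0$.

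It remains to prove that a Haar-null analytic subgroup $H$ of a locally compact Polish group $X$ lies in $\mathcal E$. The key lemma I would isolate is: \emph{for every compact $K\subseteq X$ with $K=\supp(\mu|_K)$, the set $H\cap K$ is meager in $K$.} Granting this lemma, $H\in\mathcal E$ follows by a transfinite recursion. Write $X=\bigcup_m K_m$ with $K_m$ compact. From each $K_m$ split off the $\mu$-null relatively open set $K_m\setminus\supp(\mu|_{K_m})$, which, being $\sigma$-compact and $\mu$-null, is a countable union of compact $\mu$-null sets and so lies in $\mathcal E$; on the ``fat core'' $\supp(\mu|_{K_m})$ the lemma lets us cover $H$ by countably many closed sets, each nowhere dense in that core; then iterate the same procedure on each of those closed sets. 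This recursion stabilises at a countable stage by the boundedness theorem for analytic sets, and hence $H=\bigcup_m(H\cap K_m)\in\mathcal E$.

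To prove the lemma I would argue by contradiction. If $H\cap K$ is non-meager in $K$, then --- having the Baire property, being analytic --- it is comeager in a nonempty relatively open subset of $K$; shrinking $K$ to a smaller fat compact set, we may assume $H\cap K$ is comeager in $K$ itself, with $\mu(K)>0$ and $K=\supp(\mu|_K)$. Because $H$ is a subgroup, for every $h\in H$ we have $h(H\cap K)=H\cap hK$, so $H$ is comeager in the translate $hK$ as well. Since $H$ is dense in $X$, as $h$ ranges over a dense subset of the identity-neighbourhood $KK^{-1}$ the translates $hK$ overlap $K$ in sets of positive measure; comparing the comeagerness of $H$ on these overlaps should produce, via a Steinhaus-type computation, elements of $HH^{-1}=H$ filling a neighbourhood of the identity, contradicting that $H$ is nonopen. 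With the lemma and the recursion in hand, the theorem follows.

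The step I expect to be the main obstacle is precisely this proof of the lemma: transferring the topological largeness of $H$ on the nowhere dense (but measure-theoretically fat) set $K$ into a genuine neighbourhood of the identity in $X$. The classical Steinhaus--Weil and Piccard--Pettis theorems operate in the whole group or on open sets, whereas here one must work relative to $K$, for which the overlaps $K\cap hK$ are in general \emph{not} relatively open in $K$, so comeagerness does not automatically restrict to them. Making the argument go through requires coupling the measure-theoretic fatness of $K$ (the full-support property, which keeps $K\cap hK$ of positive measure for $h$ near the identity, so that the Steinhaus theorem applies to it) with the $H$-invariance supplied by the group structure; this coupling --- rather than any of the reductions --- is where the real content of the proof lies.
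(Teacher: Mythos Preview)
The paper does not prove Theorem~\ref{t:BBJ}; it is quoted from the companion preprint \cite{BBJ}, so there is no in-paper argument to compare your proposal against. I can only assess the proposal on its own terms and against what the present paper hints about the method in \cite{BBJ}.

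Your reductions (to $H$ dense, $X$ $\sigma$-compact, and then Polish via Kakutani--Kodaira) are standard and sound. The substantive content, however, sits entirely in the ``key lemma'' that $H\cap K$ is meager in every self-supporting compact $K$, and this you do not prove. Your own diagnosis is correct: comeagerness of $H\cap K$ in $K$ says nothing about $H$ on the closed, generally non-open overlaps $K\cap hK$, so the Steinhaus-type contradiction you sketch does not fire. I do not see how to repair this with the ingredients you have assembled; the coupling you want between category-largeness on a nowhere dense fat $K$ and measure on the ambient group is essentially as hard as the theorem itself. Separately, the appeal to ``the boundedness theorem for analytic sets'' to terminate the recursion is underspecified: boundedness controls ranks of analytic subsets of $WO$, and you have not said which well-founded relation coded in $H$ you would apply it to. A measure-decrease argument can bound the length of each individual branch, but controlling the supremum over all branches still needs an argument.

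The one clue here about the actual proof is the citation of \cite[Corollary~5]{BBJ}, asserting that every \emph{analytic set} $A\notin\sigma\overline{\mathcal{HN}}_{\!X}$ in a finite-dimensional $X$ already lies in the Kuczma--Ger class $\mathcal B_X$. This points to a dichotomy for analytic sets (not just subgroups) of the shape ``either $A\in\sigma\overline{\mathcal{HN}}_{\!X}$ or some product set built from $A$ has nonempty interior'', after which the subgroup case is immediate since $HH^{-1}=H$ is nonopen. That route bypasses both your lemma and your recursion.
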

We recall that a topological space  is {\em analytic} if it is a continuous image of a Polish space.

It is well-known that a Polish group admits a non-trivial left-invariant Borel $\sigma$-additive measure if and only if it is locally compact. Nonetheless a counterpart of the ideal $\mathcal N_X$ of Haar-null sets can be defined in any Polish (not necessarily locally compact) group. Following Christensen \cite{Christensen}, we define a Borel subset $A$ of a topological group $X$ to be {\em Haar-null} if  there exists a probability measure $\mu$ on $X$ such that $\mu(xAy)=0$ for all $x,y\in X$. By \cite[Theorems 3.2.5 and 3.3.7]{EN}, for any Polish group $X$, the family $\mathcal{HN}_{\!X}$ of subsets of Borel Haar-null sets of $X$ is a $\sigma$-ideal, which coincides with the ideal $\mathcal N_X$ of sets of Haar measure zero if the group $X$ is locally compact. By \cite[Theorem 4.3]{BGJS}, a Borel susbet $A$ of a Polish Abelian group $X$ is Haar-null if and only if there exists a continuous map $f:2^\w\to X$ from the Cantor cube $2^\w=\{0,1\}^\w$ such that for every $x\in X$ the preimage $f^{-1}[A+x]$ has measure zero in the Cantor cube $2^\w$ endowed with the standard product measure. 

This characterization of Haar-null sets suggests to introduce a  Baire category analog of Haar-null sets, replacing the ideal of null sets in $2^\w$ by the ideal of meager sets in $2^\w$. This was done by Darji \cite{Darji} who defined a Borel subset $A$ of a Polish group $X$ to be {\em Haar-meager} if there exists a continuous map $f:2^\w\to X$ such that for every $x,y\in X$ the preimage $f^{-1}[xAy]$ is a meager subset of $X$.  By \cite[Theorems 3.2.6 and 3.3.13]{EN}, for any Polish group $X$, the  family $\mathcal{HM}_X$ of all subsets of Haar-meager Borel sets of $X$ is a $\sigma$-ideal, which coincides with the ideal $\mathcal M_X$ of meager subsets of $X$ if the group $X$ is locally compact. It is easy to see that each closed Haar-null set is Haar-meager, which implies that the $\sigma$-ideal $\sigma\overline{\mathcal{HN}}_{\!X}$ generated by closed Haar-null sets in $X$ is a subideal of the $\sigma$-ideal $\sigma{\overline{\mathcal{HM}}}_X$ generated by closed Haar-meager sets in $X$. For any Polish group $X$ we have the inclusions
$$\sigma\overline{\mathcal{HN}}_{\!X}\subseteq \mathcal{HN}_{\!X}\cap \sigma\overline{\mathcal{HM}}_X
\subseteq  \sigma\overline{\mathcal{HM}}_{\!X}\subseteq \mathcal{HM}_X\subseteq\mathcal{M}_X.$$If the group $X$ is locally compact, then two last inclusions turn into equalities:
$$\sigma\overline{\mathcal{HN}}_{\!X}\subseteq \mathcal{HN}_{\!X}\cap \sigma\overline{\mathcal{HM}}_X
\subseteq  \sigma\overline{\mathcal{HM}}_X=\mathcal{HM}_X=\mathcal{M}_X.$$
On the other hand, the Polish Abelian group $\IZ^\w$ contains a Borel subgroup $H$  which is Haar-null and Haar-meager in $\IZ^\w$ but cannot be covered by countably many closed Haar-meager sets, see \cite[Example 8.1]{BGJS}. This example shows that Theorem~\ref{t:BBJ} does not extend to non-locally compact groups. In this paper we prove that an example distinguishing the $\sigma$-ideals $\sigma\overline{\mathcal{HM}}_X$ and $\mathcal{HM}_X$ can be found among Borel linear subspaces of the countable product of lines.

The main result of this paper is the following 

\begin{theorem}\label{t:main} The countable product of lines $\mathbb R^\w$ contains a  Borel linear subspace, which is Haar-meager and Haar-null in $\IR^\w$ but cannot be covered by countably many closed Haar-meager sets in $\IR^\w$.
\end{theorem}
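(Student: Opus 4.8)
The plan is to construct the subspace explicitly. Fix a partition $\w=\bigcup_{k\in\w}I_k$ into consecutive finite intervals with $|I_k|\to\infty$, growing fast enough for the fusion below (it is convenient to take, say, $|I_k|=2^{2^k}$), and put
$$L\ :=\ \Big\{x\in\IR^\w:\ \lim_{k\to\infty}\frac{\big|\{n\in I_k:|x_n|\ge\e\}\big|}{|I_k|}=0\ \text{ for every }\e>0\Big\}.$$
First I would record the elementary facts: from $\{n:|x_n+y_n|\ge\e\}\subseteq\{n:|x_n|\ge\tfrac\e2\}\cup\{n:|y_n|\ge\tfrac\e2\}$ together with the obvious behaviour under scaling, $L$ is a linear subspace; $\ell^1\subseteq L$ while $(1,1,\dots)\notin L$, so $\{0\}\ne L\ne\IR^\w$; and since $x\mapsto|\{n\in I_k:|x_n|\ge1/j\}|$ is upper semicontinuous, $L=\bigcap_{j\ge1}\{x:\lim_k|\{n\in I_k:|x_n|\ge1/j\}|/|I_k|=0\}$ is $F_{\sigma\delta}$, hence Borel. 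Note also that $L$ is \emph{not} contained in any $\sigma$-compact set (put one arbitrarily large coordinate into each block and diagonalise against a countable family of coordinatewise bounds); in particular $L$ is not $\sigma$-compact, which rules out the cheap reasons for lying in $\sigma\overline{\mathcal{HM}}_{\IR^\w}$.

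Second, I would verify that $L$ is Haar-null and Haar-meager in $\IR^\w$ by exhibiting the usual witnesses. For Haar-nullity let $\mu$ be the law of a sequence $(\xi_n)$ of independent variables, each uniform on $[-3,3]$; then $\mathbb P(|\xi_n+t_n|\ge1)\ge\tfrac23$ for every $t\in\IR^\w$ and every $n$, independently over $n$, so a Chernoff estimate gives $\mathbb P\big(|\{n\in I_k:|\xi_n+t_n|\ge1\}|<\tfrac12|I_k|\big)\le e^{-c|I_k|}$ for some $c>0$; as $\sum_k e^{-c|I_k|}<\infty$, Borel--Cantelli forces $|\{n\in I_k:|\xi_n+t_n|\ge1\}|\ge\tfrac12|I_k|$ for all large $k$ almost surely, hence $\xi+t\notin L$ a.s.\ and $\mu(L-t)=0$. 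For Haar-meagerness let $f:2^\w\to\IR^\w$ be the continuous map with $f(z)_n=3$ if $z_n=1$ and $f(z)_n=-3$ if $z_n=0$; fixing $t$, for each $n$ at most one of the two possible values of $f(z)_n$ satisfies $|f(z)_n-t_n|<1$, so there is $b_n(t)\in\{0,1,\star\}$ with $|f(z)_n-t_n|\ge1\iff z_n\ne b_n(t)$, and the set of $z$ with $|\{n\in I_k:z_n\ne b_n(t)\}|\ge\tfrac12|I_k|$ for infinitely many $k$ is a dense $G_\delta$ in $2^\w$ (each clopen set involved becomes dense once its block lies beyond the finitely many coordinates constraining a given basic set). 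On that dense $G_\delta$ one has $f(z)\notin L+t$, so $f^{-1}[L+t]$ is meager for every $t$.

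The third and main step is that $L$ is not covered by countably many closed Haar-meager sets. Suppose $L\subseteq\bigcup_nF_n$ with each $F_n$ closed and Haar-meager. Each $F_n$ is then meager, hence has empty interior; since $\IR^{(\w)}$ is dense in $\IR^\w$, a closed set with empty interior cannot contain $B\cap\IR^{(\w)}$ for any nonempty basic open box $B$, so in every box there is a finitely supported point missing $F_n$. I would build by fusion a decreasing chain of basic open boxes $B_0\supseteq B_1\supseteq\cdots$ with $B_n\cap F_n=\emptyset$, whose coordinate-supports exhaust $\w$ and whose widths shrink, so that $\bigcap_n\overline{B_n}=\{x\}$ for some $x\notin\bigcup_nF_n$; the whole difficulty is to arrange $x\in L$. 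At step $n$ one first pads $B_{n-1}$, fixing new coordinates to $0$ up to the end of some very long block $I_M$, and then escapes $F_n$ by a finitely supported point of the padded box whose new nonzero coordinates lie among the long blocks $I_{M+1},I_{M+2},\dots$ and meet each block $I_m$ in only an $o(|I_m|)$-fraction; these two features are exactly what keeps the constructed point in $L$. The resulting $x\in L\setminus\bigcup_nF_n$ contradicts $L\subseteq\bigcup_nF_n$; since $L\ne\IR^\w$, this proves the theorem.

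The hard part is this last step, and it rests on a lemma of the form: \emph{every closed Haar-meager $F\subseteq\IR^\w$, inside every basic open box, can be escaped by a finitely supported point whose support meets each block $I_m$ in fewer than $\e_m|I_m|$ coordinates, for any prescribed $\e_m\to0$.} The tension is intrinsic: Haar-meagerness forces $L$ to be ``thin'', yet $L$ must be ``unavoidably spread'', so that any closed cover concentrates a set whose difference set is a neighbourhood of $0$ in $\IR^\w$ and hence, by the Steinhaus--Pettis-type theorem for Haar-meager sets, is not Haar-meager. Proving the lemma is precisely where one must use that $F$ is Haar-meager rather than merely meager: one exploits the non-heredity of Haar-meagerness to closed subgroups together with the freedom to route each escape into arbitrarily long far-away blocks, and a bookkeeping device fast-forwarding the fusion to block boundaries keeps the per-block densities under control. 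This lemma and the exact interleaving of the fusion are the technical core of the argument.
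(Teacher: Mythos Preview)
Your first two steps are essentially correct: the set $L$ you define is a Borel $F_{\sigma\delta}$ linear subspace, and your Chernoff/Borel--Cantelli and $2^\w$-witness arguments for Haar-nullness and Haar-meagerness are fine. The gap is in the third step, and it is fatal to the argument as written: the key lemma you state is \emph{false}. Let
\[
F=\Big\{x\in\IR^\w:\ \text{for every }m,\ \big|\{n\in I_m:x_n=0\}\big|\ge\tfrac{|I_m|}{2}\Big\}.
\]
This set is closed (each block condition is a finite union of coordinate hyperplanes), and it is Haar-meager: the inclusion $f:2^\w\hookrightarrow\IR^\w$ gives $f^{-1}[F+x]=\bigcap_m\{z:|\{n\in I_m:z_n=x_n\}|\ge|I_m|/2\}$, a countable intersection of proper clopen sets depending on disjoint blocks of coordinates, hence closed with empty interior in $2^\w$. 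Now take $\e_m=\tfrac1{m+3}$. Any finitely supported $y$ with $|\supp(y)\cap I_m|<\e_m|I_m|<|I_m|/2$ has at least $|I_m|/2$ zero coordinates in every block, so $y\in F$: there is no sparse finitely-supported escape. Worse, with $F_k=\{x:\forall m\ge k,\ |\{n\in I_m:x_n=0\}|\ge|I_m|/2\}$ your fusion is forced to put density $>\tfrac12$ into blocks $I_{m_k}$ with $m_k\ge k\to\infty$, so its limit point lies outside $L$. (The $F_k$ do not cover $L$---take $x_n=1/n$---so they refute your method, not your candidate $L$.) Padding with zeros is precisely what traps you in such sets; you yourself note that ``merely meager'' is not enough, yet the fusion as described uses nothing more.

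The paper proceeds along an entirely different line. Rather than defining the subspace by a density condition, it builds a linearly independent $G_\delta$-set $P\subseteq\IR^\w$ homeomorphic to $\w^\w$, by a tree-indexed construction in which each node is assigned a product of carefully chosen thick subsets of $\IR$, so that the closure of \emph{every} nonempty relatively open subset of $P$ is Haar-open in $\IR^\w$. Then one invokes the characterisation ``a closed set is Haar-meager $\Leftrightarrow$ it is not Haar-open'': if countably many closed Haar-meager sets covered $P$, Baire category in $P\cong\w^\w$ would force one of them to contain a Haar-open set, a contradiction. The thick sets are chosen with growth conditions guaranteeing that every nontrivial finite linear combination of points of $P$ has eventually large coordinates, which makes $P$ linearly independent and its linear hull $L$ meager (hence Haar-null and Haar-meager); Borelness of $L$ then follows from a general fact about linear hulls of linearly independent Borel sets. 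The decisive tool you are missing is this Haar-open characterisation; without it there is no evident way to convert ``closed Haar-meager'' into a usable avoidance property inside $L$.
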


Theorem~\ref{t:main} will be proved in Section~\ref{s:pf}. Now we apply this theorem for better understanding the borderline between various classes of ``large'' sets in Polish vector spaces and  Kuczma--Ger classes $\A_X,\cB_X,\C_X$. Those classes are related to the problem of automatic continuity of additive or mid-convex functions on Polish vector spaces. All vector spaces considered in this paper are over the field $\IR$ of real numbers. By a {\em Polish vector space} we understand a nontrivial topological vector space homeomorphic to a separable complete metric space.

 A function $f:X\to \IR$ on a subset $X$ of a vector space is called 
\begin{itemize}
\item {\em additive} if $f(x+y)=f(x)+f(y)$ for every $x,y\in X$ with $x+y\in X$;
\item  {\em mid-convex} if $f\big(\frac{x+y}2\big)\le\frac{f(x)+f(y)}2$ for every $x,y\in X$ with $\frac{x+y}2\in X$.
\end{itemize}
It is easy to see that an additive function $f:X\to\IR$ on a topological vector space $X$ is continuous if and only if it is (upper) bounded on some nonempty open set. Consequently, an additive function $f:X\to\IR$ is continuous if $f$ is upper bounded on some set $T\subseteq X$ such that the set $T-T$ has a non-empty interior.

This observation motivated Kuczma and Ger \cite{GKu} to introduce the following classes of sets in each topological vector space $X$:
\begin{itemize}
\item $\A_X$ is the class of all sets $A\subseteq X$ such that any mid-convex function $f:D\to\IR$ defined on an open convex subset $D\supseteq A$ is continuous whenever $\sup f[A]<\infty$;
\item $\mathcal B_X$ is the class of all sets $B\subseteq X$ such that any additive function $f:X\to\IR$ is continuous whenever $\sup f[B]<\infty$;
\item $\mathcal C_X$ is the class of all sets $C\subseteq X$ such that any additive function $f:X\to\IR$ is continuous whenever $f[C]$ is a bounded subset of $\IR$.
\end{itemize}
It is clear that $\mathcal A_X\subseteq\mathcal B_X\subseteq\mathcal C_X$. In \cite{GKo} Ger and Kominek proved that $\mathcal A_X=\mathcal B_X$ for every Baire topological vector space $X$. On the other hand $\mathcal B_X\ne\mathcal C_X$ for any topological vector space $X$. Indeed, for any discontinuous additive function $f:X\to \IR$ the set  $T=\{x\in X:f(x)\le 0\}$ belongs to the class $\C_X$ but not to the class $\mathcal B_X$, see \cite{Erdos}. It should be noted that the Axiom of Choice implies the existence of $2^{|X|}$ discontinuous additive functions on each topological vector space $X$. 

Now, in addition to the families $\M_X$, $\mathcal{HN}_{\!X}$, $\mathcal{HM}_X$, $\sigma\overline{\mathcal{HN}}_{\!X}$, $\sigma\overline{\mathcal{HM}}_X$ we recall definitions of some other families of sets in topological groups that are related to the Kuczma--Ger classes. 

A subset $T$ of the Cantor cube $2^{\omega}$ is called {\em thin} if for every number $n\in\omega$ the restriction $\mbox{pr}|_T$ of the projection $\mbox{pr}:2^{\omega}\to 2^{\omega\setminus\{n\}}$, $\mathrm{pr}:x\mapsto x{\restriction}_{\omega\setminus \{n\}}$, is injective.

\begin{definition}
A subset $A$ of a Polish Abelian group $X$ is called:
\begin{itemize}
\item {\em  Haar-thin} if there exists a continuous  map $f:2^{\omega}\to X$ such that $f^{-1}[A+x]$ is thin for every $x\in X,$ 
\item {\em null-finite} if there is a null sequence $\{x_n\}_{n\in\w}\subseteq X$ such that the set
$\{n\in\omega:x_n+x\in A\}\;\mbox{is finite for every}\;x\in X.$
\end{itemize}
\end{definition}

Given a Polish Abelian group $X$, denote by $\mathcal{HT}_{\!X}$ and $\mathcal{NF}_{\!X}$ the families of Haar-thin and null-finite sets in $X$, respectively. Also, let $\mathcal P_X$ be the family of all subsets of $X$, and $\Borel_X$ be the family of all Borel  subsets in $X$. 

By \cite[\S 9]{BGJS}, each Borel Haar-thin subset of a Polish Abelian group $X$ is Haar-null and Haar-meager. Therefore, 
$$\Borel_X\cap\mathcal{HT}_{\!X}\subseteq\mathcal{HN}_{\!X}\cap\mathcal{HM}_X.$$ By \cite[Theorem 9.7]{BGJS}, a subset $A$ of a Polish Abelian group $X$ is Haar-thin if and only if $A-A$ is not a neighborhood of zero. This implies the following 

\begin{corollary} If a subset of a Polish vector space $X$ is not Haar-thin, then it belongs to the Kuczma--Ger class $\mathcal C_X$.
\end{corollary}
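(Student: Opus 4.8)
The plan is to obtain the Corollary as a direct combination of the Haar-thin characterization quoted just above with the elementary automatic-continuity criterion recalled in the Introduction, so the "proof'' is really a short deduction rather than an argument with a genuine obstacle.

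First I would observe that a Polish vector space $X$ is in particular a Polish Abelian group under addition, so \cite[Theorem 9.7]{BGJS} applies to it: for a set $C\subseteq X$, failing to be Haar-thin is equivalent to the difference set $C-C$ being a neighborhood of zero in $X$. Thus, if $C$ is not Haar-thin, then $C-C$ is a neighborhood of zero and, in particular, $\inn(C-C)\neq\emptyset$.

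Next I would recall from the Introduction that an additive function $f:X\to\IR$ on a topological vector space $X$ is continuous whenever it is upper bounded on some set $T\subseteq X$ for which $T-T$ has nonempty interior. To conclude that a non-Haar-thin set $C$ lies in $\C_X$, take an arbitrary additive function $f:X\to\IR$ with $f[C]$ a bounded subset of $\IR$. Then $\sup f[C]<\infty$, i.e. $f$ is upper bounded on $C$, while by the first step $\inn(C-C)\neq\emptyset$; applying the criterion with $T=C$ shows that $f$ is continuous. Since $f$ was arbitrary, this is exactly the statement that $C\in\C_X$. The only input beyond bookkeeping is \cite[Theorem 9.7]{BGJS}, which we are allowed to use, so no step is expected to present any difficulty.
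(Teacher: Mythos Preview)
Your proposal is correct and follows exactly the route the paper intends: the paper states the corollary as an immediate consequence (``This implies the following'') of \cite[Theorem 9.7]{BGJS} together with the automatic-continuity criterion recalled in the Introduction, and you have simply written out those two steps.
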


Therefore, $\mathcal P_X\setminus\mathcal{HT}_{\!X}\subseteq\mathcal C_X$. On the other hand, for any  Polish vector space $X$ and any discontinuous additive function $f:X\to\IR$ the set $T=\{x\in X:f(x)\ge 0\}$ does not belong to the Kuczma--Ger class $\mathcal B_X$ but is not thin (as $T-T=X$ is a neighborhood of zero). This example shows that $\mathcal P_X\setminus\mathcal{HT}_{\!X}\not\subseteq\mathcal B_X$ for any Polish vector space $X$ (by our assumption, all vector spaces are non-trivial).

\begin{problem} Is $\Borel_X\setminus\mathcal{HT}_{\!X}\subseteq \mathcal B_X$ for some  Polish vector space $X$?
\end{problem}

By \cite[Theorems 5.1 and 6.1]{BJ}, each Borel null-finite subset of a Polish Abelian group $X$ is Haar-null and Haar-meager. Therefore, $\Borel_X\cap\mathcal{NF}_{\!X}\subseteq \mathcal{HM}_X\cap\mathcal{HM}_X$. By \cite[Theorem 9.1]{BJ}, a subset $A$ of a Polish vector space $X$ belongs to the Kuczma--Ger class $\mathcal B_X$ if $A$ is not null-finite in $X$. Therefore, $\mathcal P_X\setminus\mathcal{NF}_{\!X}\subseteq\mathcal B_X$ for any Polish vector space $X$.

Now we turn to the $\sigma$-ideals $\sigma\overline{\mathcal{HN}}_X$ and $\sigma\overline{\mathcal{HM}}_X$. By \cite[Corollary 5]{BBJ}, any analytic set $A\notin\sigma\overline{\mathcal{HN}}_X$ in a  finite-dimensional topological vector space $X$ belongs to the Kuczma--Ger class $\mathcal B_X$. The situation changes for infinite-dimensional Polish vector spaces.

\begin{theorem} The Polish vector space $X=\IR^\w$ contains a linear subspace $L$ such that $$L\in\Borel_X\cap\mathcal{HT}_{\!X}\cap\mathcal{NF}_{\!X}\setminus\sigma{\overline{\mathcal{HM}}_X\subseteq\mathcal{HN}_{\!X}\cap\mathcal{HM}_X\setminus\sigma\overline{\mathcal{HN}}}_{\!X}$$but $L\notin\mathcal C_X$.
\end{theorem}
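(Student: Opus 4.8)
The plan is to take for $L$ the Borel linear subspace of $\IR^\w$ produced by Theorem~\ref{t:main}, which is Haar-meager, Haar-null, and cannot be covered by countably many closed Haar-meager sets. Observe first that $L\ne\IR^\w$: since $\HM_X\subseteq\M_X$ and $\IR^\w$ is not meager in itself, $\IR^\w$ is not Haar-meager, so no Haar-meager set can equal it. Thus $L$ is a \emph{proper} linear subspace of $\IR^\w$. It is immediate that $L\in\Borel_X$, that $L\in\HM_X\cap\HN_X$, and that $L\notin\sigma\overline{\HM}_X$; moreover $\sigma\overline{\HN}_X\subseteq\sigma\overline{\HM}_X$ gives $L\notin\sigma\overline{\HN}_X$. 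Combined with the inclusion $\Borel_X\cap\HT_X\subseteq\HN_X\cap\HM_X$ (Borel Haar-thin sets are Haar-null and Haar-meager, as recalled before the statement), this yields the displayed chain $\Borel_X\cap\HT_X\cap\NF_X\setminus\sigma\overline{\HM}_X\subseteq\HN_X\cap\HM_X\setminus\sigma\overline{\HN}_X$. So it only remains to verify that $L\in\HT_X$, that $L\in\NF_X$, and that $L\notin\C_X$.

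The memberships $L\in\HT_X$ and $L\in\NF_X$ will follow from the single fact that $L$ is a proper linear subspace. For Haar-thinness: $L-L=L$ is not a neighbourhood of $0$, because a linear subspace which is a neighbourhood of $0$ is absorbing, hence equal to $\IR^\w$ (for every $x\in\IR^\w$ one has $x/n\to0$); so $L\in\HT_X$ by the characterization \cite[Theorem~9.7]{BGJS}. For null-finiteness: pick any $v\in\IR^\w\setminus L$ and set $x_n:=\frac{1}{n+1}v$. Then $x_n\to0$ in $\IR^\w$ (each coordinate tends to $0$), and for $n\ne m$ one has $x_n-x_m=\bigl(\frac{1}{n+1}-\frac{1}{m+1}\bigr)v\notin L$, i.e.\ $x_n$ and $x_m$ lie in distinct cosets of $L$. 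Hence for every $x\in\IR^\w$ the set $\{n\in\w:x_n+x\in L\}=\{n\in\w:x_n\in L-x\}$ has at most one element, witnessing $L\in\NF_X$.

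To get $L\notin\C_X$ I would first show that $L$ is \emph{dense} in $\IR^\w$. If it were not, then $\overline L$ would be a proper closed linear subspace; since $\IR^\w$ is a locally convex Fréchet space, the Hahn--Banach theorem provides a nonzero continuous linear functional $\ell$ on $\IR^\w$ with $\overline L\subseteq\ker\ell$. As $\ell$ is a finite linear combination of coordinate projections, $\ker\ell$ is a closed subspace of $\IR^\w$ which is Haar-null (a probability measure that is Gaussian along a line transverse to $\ker\ell$ and a point mass along $\ker\ell$ itself witnesses this) and hence, being closed, Haar-meager. Then $L\subseteq\overline L\subseteq\ker\ell$ would be covered by a single closed Haar-meager set, contradicting $L\notin\sigma\overline{\HM}_X$; so $L$ is dense. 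Now, since $L$ is a proper $\IQ$-linear subspace of $\IR^\w$, extend a $\IQ$-Hamel basis of $L$ to a $\IQ$-Hamel basis of $\IR^\w$ and let $f:\IR^\w\to\IR$ be the additive functional vanishing on $L$ and on all but one of the added basis vectors and equal to $1$ on the remaining one. Then $f\ne0$ while $\ker f\supseteq L$ is dense, so $\ker f$ is not closed and $f$ is discontinuous. Since $L$ is a linear subspace, $f[L]=\{0\}$ is bounded, so $f$ witnesses $L\notin\C_X$; a fortiori $L\notin\cB_X$ and $L\notin\A_X$.

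The only substantial ingredient is Theorem~\ref{t:main} itself; granted it, everything above is a routine definition chase. The one point deserving a little care is the density of $L$: it rests on the fact — needed only for the space $\IR^\w$ — that every proper closed linear subspace is contained in a closed Haar-meager set, which in turn uses the local convexity of $\IR^\w$ (Hahn--Banach) together with the explicit description of its continuous dual as the space of finite linear combinations of coordinate projections.
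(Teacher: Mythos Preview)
Your proof is correct and follows the same overall plan as the paper: take the subspace $L$ from Theorem~\ref{t:main} and verify the remaining memberships. Two points differ in execution. First, for null-finiteness the paper invokes the contrapositive of \cite[Theorem~9.1]{BJ} (a set not in $\cB_X$ must be null-finite, and the discontinuous additive $f$ with $f[L]=\{0\}$ witnesses $L\notin\cB_X$), whereas you give a direct construction with the null sequence $x_n=v/(n+1)$ hitting each coset of $L$ at most once; your argument is shorter and avoids the external reference. Second, the paper simply asserts the existence of a discontinuous additive $f$ with $f[L]=\{0\}$, while you justify discontinuity by first proving that $L$ is dense (via Hahn--Banach and the observation that a closed hyperplane in $\IR^\w$ is Haar-meager), so that $\ker f\supseteq L$ is dense but proper and hence not closed. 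The density argument is a pleasant extra observation but is not strictly required: any $\IQ$-linear $f$ defined from a $\IQ$-Hamel basis with $f(v)=1$ on a single basis vector $v$ satisfies $f(\IR v)\subseteq\IQ$, so $f$ is not $\IR$-linear and therefore not continuous, regardless of whether $L$ is dense.
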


\begin{proof} Let $L$ be the Borel linear subspace of $\IR^\w$, constructed in Theorem~\ref{t:main}. Since $L\ne\IR^\w$, there exists a discontinuous additive function $f:\IR^\w\to\IR$ such that $f[L]=\{0\}$. By \cite[Theorem 9.1]{BJ}, the set $L$ is null-finite. Since $L=L-L$ is not open in $\IR^\w$, $L$ is Haar-thin in $\IR^\w$ by \cite[Theorem 9.7]{BGJS}. Therefore, $$L\in\Borel_X\cap\mathcal{HT}_{\!X}\cap\mathcal{NF}_{\!X}\setminus\sigma{\overline{\mathcal{HM}}_X\subseteq\mathcal{HN}_{\!X}\cap\mathcal{HM}_X\setminus\sigma\overline{\mathcal{HN}}}_{\!X}.$$
The discontinuous additive function $f$, which is bounded on the set $L$, witnesses that $L\notin\mathcal C_X$.
\end{proof}

The following corollary sums up the preceding discussion.

\begin{corollary} For any nonzero cardinal $n\le\w$ and the Polish vector space $X=\IR^n$, the following statements hold:
\begin{enumerate}
\item $\Borel_X\cap(\mathcal{HT}_{\!X}\cup\mathcal{NF}_{\!X})\subseteq\mathcal{HN}_{\!X}\cap\mathcal{HM}_X$;
\item $\sigma\overline{\mathcal{HN}}_{\!X}\subseteq\mathcal{HN}_{\!X}\cap\sigma\overline{\mathcal{HM}}_X\subseteq\sigma\overline{\mathcal{HM}}_X\subseteq\M_X$;
\item $\mathcal P_X\setminus\mathcal{HT}_{\!X}\subseteq\mathcal C_X$;
\item $\mathcal P_X\setminus\mathcal{NF}_{\!X}\subseteq\mathcal A_X=\mathcal B_X\subseteq\C_X$;
\item If $n<\w$, then $\Borel_X\setminus\sigma\overline{\mathcal{HN}}_{\!X}\subseteq\mathcal A_X=\mathcal B_X\subseteq\C_X$;
\item If $n=\w$, then $\Borel_X\cap\mathcal{HT}_{\!X}\cap\mathcal{NF}_{\!X}\setminus\sigma\overline{\mathcal{HM}}_X\not\subseteq\mathcal C_X$.
\end{enumerate}
\end{corollary}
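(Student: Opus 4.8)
The plan is to treat the six items separately, reducing each to a statement recalled in the Introduction together with the Theorem stated immediately before this Corollary. I would first note that $X=\IR^n$ is a Polish, hence Baire, topological vector space for every nonzero $n\le\w$, so the Ger--Kominek equality $\mathcal A_X=\mathcal B_X$ from \cite{GKo} is available, as are the trivial inclusions $\mathcal A_X\subseteq\mathcal B_X\subseteq\mathcal C_X$.

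For (1), I would recall that each Borel Haar-thin set and each Borel null-finite set in a Polish Abelian group is both Haar-null and Haar-meager (by \cite[\S9]{BGJS} and \cite[Theorems 5.1 and 6.1]{BJ}), that is, $\Borel_X\cap\mathcal{HT}_{\!X}\subseteq\mathcal{HN}_{\!X}\cap\mathcal{HM}_X$ and $\Borel_X\cap\mathcal{NF}_{\!X}\subseteq\mathcal{HN}_{\!X}\cap\mathcal{HM}_X$; taking the union of these gives (1). Statement (2) is precisely the chain $\sigma\overline{\mathcal{HN}}_{\!X}\subseteq\mathcal{HN}_{\!X}\cap\sigma\overline{\mathcal{HM}}_X\subseteq\sigma\overline{\mathcal{HM}}_X\subseteq\mathcal{HM}_X\subseteq\M_X$ recorded in the Introduction, which rests on the observation that every closed Haar-null set is Haar-meager. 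Statement (3) restates the Corollary that a subset of a Polish vector space which is not Haar-thin belongs to $\mathcal C_X$; I would derive it from the characterisation \cite[Theorem 9.7]{BGJS} of Haar-thin sets as those $A$ with $A-A$ not a neighbourhood of zero, combined with the classical fact that an additive function bounded above on a set $C$ with $C-C$ a neighbourhood of zero is continuous.

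For (4), I would combine $\mathcal P_X\setminus\mathcal{NF}_{\!X}\subseteq\mathcal B_X$ (from \cite[Theorem 9.1]{BJ}) with $\mathcal A_X=\mathcal B_X\subseteq\mathcal C_X$. For (5), if $n<\w$ then $X=\IR^n$ is finite-dimensional, so \cite[Corollary 5]{BBJ} gives $\Borel_X\setminus\sigma\overline{\mathcal{HN}}_{\!X}\subseteq\mathcal B_X$, and again $\mathcal A_X=\mathcal B_X\subseteq\mathcal C_X$. Finally, (6) is exactly the assertion of the Theorem preceding this Corollary: for $n=\w$ the Borel linear subspace $L\subseteq\IR^\w$ produced there lies in $\Borel_X\cap\mathcal{HT}_{\!X}\cap\mathcal{NF}_{\!X}\setminus\sigma\overline{\mathcal{HM}}_X$ but not in $\mathcal C_X$, which is the required non-inclusion.

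Thus the whole corollary is bookkeeping with results already established, except for item (6), whose real content is Theorem~\ref{t:main}. The main obstacle, therefore, is the construction carried out in Section~\ref{s:pf} of a Borel linear subspace of $\IR^\w$ that is Haar-null and Haar-meager yet not coverable by countably many closed Haar-meager sets; once that subspace is available, the supplementary properties needed for (6) --- null-finiteness via \cite[Theorem 9.1]{BJ}, Haar-thinness via \cite[Theorem 9.7]{BGJS}, and failure of membership in $\mathcal C_X$ via a discontinuous additive functional vanishing on $L$ --- follow quickly.
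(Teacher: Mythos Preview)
Your proposal is correct and matches the paper's approach exactly: the paper offers no separate proof of this corollary, introducing it with ``The following corollary sums up the preceding discussion,'' and your item-by-item bookkeeping reproduces precisely that discussion, citing the same sources (\cite[\S9]{BGJS}, \cite[Theorems 5.1, 6.1, 9.1]{BJ}, \cite[Theorem 9.7]{BGJS}, \cite[Corollary 5]{BBJ}, the Ger--Kominek equality, and the preceding Theorem for item~(6)).
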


\begin{problem} Does every infinite-dimensional Banach space $X$ contain a meager Borel linear subspace $H\notin\sigma\overline{\mathcal{HM}}_X$?
\end{problem}

\section{Proof of Theorem~\ref{t:main}}\label{s:pf}

In this section we present a proof of Theorem~\ref{t:main}, which uses the ideas of  the construction of a Borel subgroup $H\notin\sigma\overline{\mathcal{HM}}_{\IZ^\w}$ in Example 8.1 in \cite{BGJS}. First we recall  some  notions and facts from \cite{BGJS}.

\begin{definition} 
A subset $A$ of a Polish Abelian group $X$ is called:
\begin{itemize}
\item {\em thick} if for any compact subset $K\subseteq X$ there is $x\in X$ such that $K\subseteq x+ A$,
\item {\em Haar-open} in $X$ if for any
compact subset $K\subseteq X$ and point $p\in K$ there exists $x\in X$ such that $K\cap(A + x)$ is a neighborhood of $p$ in $K$.
\end{itemize}
\end{definition}

The following two propositions are proved in Proposition 7.4 and Theorem 7.5 of \cite{BGJS}, respectively.

\begin{proposition}
If a subset $A$ of a Polish Abelian group $X$ is Haar-open, then for any
compact set $K\subseteq X$ there exists a finite set $F\subseteq X$ such that $K\subseteq F + A$.
\end{proposition}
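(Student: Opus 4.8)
The plan is to run a direct compactness argument: use the defining property of a Haar-open set to cover each point of $K$ by a relatively open patch that sits inside a single translate of $A$, and then pass to a finite subcover.

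First I would fix a compact set $K\subseteq X$. For each point $p\in K$, the Haar-openness of $A$ supplies a point $x_p\in X$ such that $K\cap(A+x_p)$ is a neighborhood of $p$ in $K$; unwinding the definition of ``neighborhood in $K$'', there is a relatively open set $U_p\subseteq K$ with $p\in U_p\subseteq K\cap(A+x_p)\subseteq A+x_p$. The family $\{U_p\}_{p\in K}$ is then an open cover of the compact space $K$.

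Next I would extract a finite subcover $U_{p_1},\dots,U_{p_n}$ of $K$ and set $F=\{x_{p_1},\dots,x_{p_n}\}$. Since $X$ is abelian, $A+x_{p_i}=x_{p_i}+A$, so
$$K=\bigcup_{i=1}^n U_{p_i}\subseteq\bigcup_{i=1}^n(A+x_{p_i})=\bigcup_{i=1}^n(x_{p_i}+A)=F+A,$$
which is exactly the claimed conclusion.

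I do not anticipate a genuine obstacle here. The only point needing a moment's care is the passage from the stated phrase ``$K\cap(A+x_p)$ is a neighborhood of $p$ in $K$'' to an honestly relatively open witness $U_p$, which is immediate. The whole argument rests on the compactness of $K$ together with commutativity of $X$ (used only to rewrite left translates as right translates); completeness, separability, and all measure- or category-theoretic features of Polish groups are irrelevant.
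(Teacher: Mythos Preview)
Your argument is correct: the compactness extraction from the pointwise neighborhoods supplied by the definition of Haar-open is exactly what is needed, and nothing more. Note that the paper itself does not prove this proposition; it merely cites Proposition~7.4 of \cite{BGJS}, and your proof is the standard one that reference contains.
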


\begin{proposition}\label{Ho}
A closed subset $A$ of a Polish Abelian group $X$ is Haar-meager if and only if it is not Haar-open in $X$.
\end{proposition}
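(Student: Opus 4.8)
The statement is an equivalence; I would treat the two implications separately.

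\emph{``Haar-meager $\Rightarrow$ not Haar-open''.} I argue by contraposition, proving that a closed Haar-open set $A$ is not Haar-meager. Let $f\colon 2^\w\to X$ be an arbitrary continuous map; it suffices to find $z\in X$ and a cylinder $[s]\subseteq 2^\w$ with $f([s])\subseteq A+z$, for then the preimage $f^{-1}[A+z]$ contains the nonempty open set $[s]$ and so is non-meager in the Baire space $2^\w$. Put $K:=f(2^\w)$, a compact subset of $X$. Applying Haar-openness to the compact set $K$ and each of its points $q$, we get $x_q\in X$ such that $K\cap(A+x_q)$ is a neighbourhood of $q$ in $K$; set $U_q:=\mathrm{int}_K\big(K\cap(A+x_q)\big)$, a relatively open subset of $K$ containing $q$. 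The open cover $\{U_q:q\in K\}$ of $K$ has a finite subcover $U_{q_1},\dots,U_{q_m}$, so the open sets $f^{-1}[U_{q_1}],\dots,f^{-1}[U_{q_m}]$ cover $2^\w$; hence at least one of them, say $f^{-1}[U_{q_1}]$, is nonempty, and being nonempty and open it contains some cylinder $[s]$, which then satisfies $f([s])\subseteq U_{q_1}\subseteq A+x_{q_1}$. As $f$ was arbitrary, $A$ is not Haar-meager.

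\emph{``not Haar-open $\Rightarrow$ Haar-meager''.} Assume $A$ is closed and not Haar-open (the case $A=\emptyset$ is trivial since $\emptyset$ is Haar-meager), and fix a compact $K\subseteq X$ and $p\in K$ witnessing the failure of Haar-openness: $K\cap(A+x)$ is a neighbourhood of $p$ in $K$ for no $x\in X$. I record two facts. (i) No relative neighbourhood of $p$ in $K$ is contained in any translate of $A$ (otherwise $K\cap(A+x)$ would contain such a neighbourhood, hence be a neighbourhood of $p$ in $K$); since the family $\{A+x:x\in X\}$ is translation invariant, the same holds for every translated copy of a relative neighbourhood of $p$ in $K$. (ii) $p$ is not isolated in $K$ --- else $\{p\}$ would contradict (i), as $\{p\}\subseteq A+(p-a)$ for $a\in A$ --- and hence $K$ has relative neighbourhoods of $p$ of arbitrarily small diameter. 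Now I must build a continuous $f\colon 2^\w\to X$ with $f^{-1}[A+x]$ meager for every $x$. Since $A$ is closed, $f^{-1}[A+x]$ is closed in $2^\w$, so it is nowhere dense as soon as it contains no cylinder; hence it suffices to arrange that for every $s\in 2^{<\w}$ the image $f([s])$ lies in no translate of $A$, and by (i) this will hold provided each $f([s])$ \emph{contains} a translated copy of some relative neighbourhood of $p$ in $K$.

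Such an $f$ is obtained by a fusion (Cantor-scheme) recursion: one constructs compact sets $L_s=f([s])$ ($s\in 2^{<\w}$) with $L_{s0}\cup L_{s1}=L_s$, with $\diam L_s\to0$ along every branch (so that, setting $f(t)$ to be the unique point of $\bigcap_nL_{t\restriction n}$, the map $f$ is well defined and continuous), and with each $L_s$ containing a translated copy of a relative neighbourhood of $p$ in $K$. Compactness of $K$ is what makes this go through: each time a set $L_s$ is subdivided into the smaller pieces that become the $L_{s'}$ at deeper levels, one must still fit a smaller translated relative neighbourhood of $p$ into every piece; since by (ii) such neighbourhoods exist in all small diameters, and since $K$ --- being compact --- has finite $\e$-covering numbers for all $\e>0$, one may let the diameters of the scheme decrease slowly enough, and distribute translated copies of such small neighbourhoods within the pieces carefully enough, for this containment to propagate down the tree. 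Once $f$ is built, for every $x$ no cylinder lies inside $f^{-1}[A+x]$, so $f^{-1}[A+x]$ is nowhere dense, in particular meager; thus $f$ witnesses that $A$ is Haar-meager.

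The genuinely delicate step, which I expect to be the main obstacle, is precisely this fusion --- orchestrating the subdivisions so that \emph{every} cylinder's image retains a translated relative neighbourhood of $p$ while all diameters tend to $0$. This is the combinatorial core of the construction of the Borel Haar-meager subgroup of $\IZ^\w$ that cannot be covered by countably many closed Haar-meager sets, given in \cite[Example 8.1]{BGJS}, and I would carry it out along the same lines. The feature that might look threatening --- uniformity over all $x\in X$ --- is by contrast harmless: by (i), a single translated relative neighbourhood of $p$ in $K$ is contained in no translate of $A$ whatsoever, so only the countably many cylinders $[s]$ need be handled.
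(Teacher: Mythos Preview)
The paper does not prove this proposition at all: it is simply quoted from \cite[Theorem~7.5]{BGJS} (see the sentence immediately preceding the two propositions in Section~\ref{s:pf}). There is therefore no in-paper argument to compare against, and I can only assess your proposal on its own merits.

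Your first implication is correct and efficiently argued; the finite subcover is even superfluous, since one translate $A+x_p$ at a single point $p\in K$ already makes $f^{-1}[A+x_p]$ non-meager.

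In the second implication your overall strategy is sound: facts (i) and (ii) are correct, and ensuring that every $f([s])$ contains a translate of some relative neighbourhood of $p$ in $K$ is exactly what makes each $f^{-1}[A+x]$ closed with empty interior. The gap is that you do not actually build such an $f$, and the pointer to \cite[Example~8.1]{BGJS} is off target: that example (like Section~\ref{s:pf} of the present paper) constructs a $G_\delta$ \emph{set} in a product group via a tree of balls; it does not produce a witness map $2^\w\to X$ for Haar-meagerness of a given closed set, and its combinatorics do not transfer. More concretely, your binary scheme with $L_{s0}\cup L_{s1}=L_s$ runs into the difficulty that a translate $c+(W-p)$ of a $K$-neighbourhood $W$ of $p$ is an arbitrary compact set in $X$, and there is no evident way to split it into two pieces each again containing such a translate, because $q+(W'-p)$ for $q\in W$ and a smaller neighbourhood $W'$ need not lie inside $W$ (we are in $X$, not in $K$). ``Letting the diameters decrease slowly'' does not address this nesting problem.

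A clean way to realise your strategy is to abandon the binary scheme: choose closed relative neighbourhoods $W_n\ni p$ in $K$ with $W_n-p$ contained in a summable neighbourhood basis of $0$, and define $f\colon\prod_{n}W_n\to X$ by $f\big((w_n)_n\big)=\sum_n(w_n-p)$. The series converges by completeness, $f$ is continuous, and every nonempty basic open set in $\prod_nW_n$ has image containing a translate of some $W_m$ (freeze all coordinates except the $m$-th at $p$). Since Darji's definition of Haar-meager allows an arbitrary compact metrizable domain (equivalently, pass to $2^\w$ via an irreducible surjection), this $f$ is the desired witness. So your plan can be completed, but not along the lines you indicate.
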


We denote by $\w$ the set of finite ordinals and by $\IN=\w\setminus\{0\}$ the set of positive integer numbers.  A family $\F$ of sets is called {\em disjoint} if $A\cap B=\emptyset$ for any distinct sets $A,B\in\F$. For a function $f:X\to Y$ and subset $A\subseteq X$ by $f[A]$ we denote the image $\{f(a):a\in A\}$ of $A$ under the map $f$.

Let $\cov(\M)$ denote the smallest cardinality of a cover of the Polish space $\w^\w
$ by nowhere dense subsets. It is clear that $\w_1\le\cov(\mathcal M)\le\mathfrak c$. It is known \cite{Blass} that $\cov(\M)=\mathfrak c$ under Martin's Axiom.

Theorem~\ref{t:main} follows immediately from the following (a bit more precise) statement. 

\begin{example}\label{ex1}
The countable product of lines $\IR^\w$ contains a linearly independent Polish subspace $P$ such that 
\begin{itemize}
\item $P$ cannot be covered by less than $\cov(\M)$ closed Haar-meager subsets of $\IR^\w$;
\item the linear hull of $P$ is a meager Borel subspace of $\IR^\w$.
\end{itemize}
\end{example}

\begin{proof} In the construction of the set $P$ we shall use the following lemma. 

\begin{lemma}\label{l1} There exists an infinite family $\Tau$ of thick subsets of $\IR$ and an increasing sequence $(\Xi_m)_{m\in\w}$ of positive integer numbers such that for any positive numbers $n\le m$,  real numbers $\lambda_1,\dots,\lambda_n\in[-m,m]\setminus\big[{-}\frac1m,\frac1m\big]$, pairwise distinct sets $T_1,\dots,T_n\in\Tau$ and points $x_i\in T_i$, $i\le n$, such that $\{x_1,\dots,x_n\}\not\subseteq [-\Xi_m,\Xi_m]$ we have $|\lambda_1x_1+\dots+\lambda_n x_n|\ge m+1$.
\end{lemma}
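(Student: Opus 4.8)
The plan is to realise the members of $\Tau$ as unions of long intervals placed at extremely rapidly growing, pairwise well-separated scales, reserving a private sequence of scales for each member of $\Tau$. The starting observation is that any $A\subseteq\IR$ containing closed intervals of arbitrarily large length is thick: given a compact $K\subseteq[-R,R]$ and an interval $[N,N+\ell]\subseteq A$ with $\ell\ge 2R$, one has $K\subseteq[-R,R]\subseteq(-R-N)+[N,N+\ell]\subseteq(-R-N)+A$. So it suffices to insert arbitrarily long intervals into each member of $\Tau$, the real content being to keep the ``large'' parts of distinct members so far apart that in a combination $\lambda_1x_1+\dots+\lambda_nx_n$ the summand living at the highest scale cannot be cancelled by the others.

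Concretely, I would fix a partition $\IN=\bigsqcup_{k\in\IN}B_k$ into infinitely many infinite sets, put $N_1:=10$ and $N_{j+1}:=N_j^{2}$ (so $N_j=10^{2^{j-1}}$, in particular $N_j\ge j$), and set
$$T_k:=\bigcup_{j\in B_k}[\,N_j,\;N_j+j\,],\qquad k\in\IN .$$
Since $B_k$ is infinite, $T_k$ contains intervals of every length, hence is thick; since the intervals $[N_j,N_j+j]$ are pairwise disjoint and tend to $+\infty$, the sets $T_k$ are pairwise distinct, so $\Tau:=\{T_k:k\in\IN\}$ is an infinite family of thick subsets of $\IR$. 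Finally, for $m\in\w$ I would put $j_m:=\min\{j\in\IN:N_j\ge 4m^{4}\}$ and $\Xi_m:=2N_{j_m}+m$; this is a strictly increasing sequence of positive integers, and its rapid growth is exactly what makes the estimate go through.

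For the verification one takes $m\ge n\ge1$, scalars $\lambda_i$ with $1/m<|\lambda_i|\le m$, pairwise distinct $T_{k_1},\dots,T_{k_n}\in\Tau$, and points $x_i\in T_{k_i}$ with $\max_i|x_i|>\Xi_m$. Each $x_i$ lies in a unique interval $[N_{j_i},N_{j_i}+j_i]$ with $j_i\in B_{k_i}$, so all $x_i>0$ and, the $B_{k_i}$ being disjoint, the $j_i$ are pairwise distinct. Choose $i_0$ with $x_{i_0}=\max_i x_i$, so $x_{i_0}>\Xi_m$; then $j^*:=j_{i_0}$ is the largest of the $j_i$, since otherwise some $x_i\ge N_{j^*+1}=N_{j^*}^{2}>N_{j^*}+j^*\ge x_{i_0}$, a contradiction; hence $j_i\le j^*-1$ for all $i\ne i_0$. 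From $2N_{j^*}\ge N_{j^*}+j^*\ge x_{i_0}>\Xi_m\ge 2N_{j_m}$ one gets $j^*>j_m$, so $N_{j^*-1}\ge N_{j_m}\ge 4m^4$ and $N_{j^*}=N_{j^*-1}^{2}\ge 4m^{4}N_{j^*-1}$; and since the $j_i$ with $i\ne i_0$ are at most $m-1$ distinct integers $\le j^*-1$, one has $\sum_{i\ne i_0}x_i\le\sum_{i\ne i_0}2N_{j_i}\le 2m\,N_{j^*-1}$. Hence
$$\Big|\sum_{i=1}^{n}\lambda_i x_i\Big|\ \ge\ |\lambda_{i_0}|\,x_{i_0}-\sum_{i\ne i_0}|\lambda_i|\,x_i\ \ge\ \frac1m N_{j^*}-2m^{2}N_{j^*-1}\ \ge\ (4m^{3}-2m^{2})N_{j^*-1}\ \ge\ m+1 .$$

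The one genuine obstacle is that the coefficient bound $m$ and the number $n\le m$ of summands are both unbounded, whereas the sets of $\Tau$ must be fixed once and for all; this is precisely what the choice of $\Xi_m$ resolves, by guaranteeing that whenever some $|x_i|$ exceeds $\Xi_m$ the top scale $N_{j^*}$ already dominates $4m^4$ times the next scale $N_{j^*-1}$, which in turn dwarfs the combined contribution of all the lower-scale summands. Granting that, the displayed chain of inequalities is routine, and the constant $4m^4$ can be increased if one wants extra slack.
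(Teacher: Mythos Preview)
Your proof is correct and follows essentially the same strategy as the paper's: partition $\IN$ into infinitely many infinite pieces, assign to each piece a thick set built from intervals centred at double-exponentially growing scales, and use the gap $N_{j^*}\ge cN_{j^*-1}$ to let the top-scale summand dominate the rest. The only differences are cosmetic (the paper uses centres $2^{2^a}$ with intervals $[2^{2^a}-a,2^{2^a}+a]$ and defines $\Xi_m$ implicitly via an inequality, whereas you use $N_j=10^{2^{j-1}}$ and an explicit $\Xi_m=2N_{j_m}+m$); the dominant-term estimate and the role of $\Xi_m$ are identical.
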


\begin{proof} For every $m\in\w$, let $\xi_m\in\w$ be the smallest number such that $$2^{2^{x}}-x\ge m\big(1+m+m^2(2^{2^{x-1}}+x)\big)\quad\mbox{for all $x\ge \xi_m$},$$ and put $\Xi_m=2^{2^{\xi_m}}+\xi_m$. Choose an infinite family $\A$ of pairwise disjoint infinite subsets of $\IN$ and for every $A\in\A$ consider the thick subset $$T_A:=\bigcup_{a\in A}\big[2^{2^a}-a,2^{2^a}+a\big]$$ of $\IR$. Taking into account that the families $\big(\big[2^{2^n}-n,2^{2^n}+n\big]\big)_{n=1}^\infty$ and $\mathcal A$ are disjoint, we conclude that so is the family $\Tau=(T_A)_{A\in\A}$.

We claim that the disjoint family $\Tau$ and the sequence $(\Xi_m)_{m\in\w}$ have the required property.
Take any positive numbers $n\le m$, real numbers $\lambda_1,\dots,\lambda_n\in[-m,m]\setminus \big[-\frac1m,\frac1m\big]$, pairwise distinct sets $T_1,\dots,T_n\in\Tau$ and points $x_1\in T_1,\dots,x_n\in T_n$ such that $\{x_1,\dots,x_n\}\not\subseteq [-\Xi_m,\Xi_m]$. For every $i\le n$ find an integer number $a_i$ such that $x_i=2^{2^{a_i}}+\e_i$ for some $\e_i\in[-a_i,a_i]$. Since the family $\A$ is disjoint and the sets $T_1,\dots,T_n$ are pairwise distinct, the points $a_1,\dots,a_n$ are pairwise distinct, too. Let $j$ be the unique number such that $a_j=\max\{a_i:1\le i\le n\}$. Taking into account that $\{x_1,\dots,x_n\}\not\subseteq [0,\Xi_m]=[0,2^{2^{\xi_m}}+\xi_m]$, we conclude that $2^{2^{a_j}}+a_j\ge 2^{2^{a_j}}+\e_j=x_j>2^{2^{\xi_m}}+\xi_m$ and hence $a_j>\xi_m$. The definition of the number $\xi_m$ guarantees that $2^{2^{a_j}}-a_j>m(1+m+m^2(2^{2^{a_j-1}}+a_j))$ and hence
\begin{multline*}
|\lambda_jx_j|\ge \tfrac1mx_j=\tfrac1m\big(2^{2^{a_j}}+\e_j\big)\ge \tfrac1m\big(2^{2^{a_j}}-a_j\big)\ge\\
1+m+m^2\big(2^{2^{a_j-1}}+a_j\big)\ge 1+m+ \sum_{i\ne j}|\lambda_i|\big(2^{2^{a_i}}+a_i\big)\ge 1+m+ \Big|\sum_{i\ne j}\lambda_ix_i\Big|\ge m+1.
\end{multline*}
 Thus $|\sum_{i=1}^n\lambda_ix_i|\ge m+1$.
\end{proof}

Now we are ready to start the construction of the $G_\delta$-set $P\subseteq\IR^\w$. This construction will be done by induction on the tree $\w^{<\w}=\bigcup_{n\in\w}\w^n$ consisting of finite sequences $s=(s_0,\dots,s_{n-1})\in\w^n$ of finite ordinals. 
For a sequence $s=(s_0,\dots,s_{n-1})\in \w^n$ and a~number $m\in\w$ by $s\hat{\;}m=(s_0,\dots,s_{n-1},m)\in \w^{n+1}$ we denote the {\em concatenation} of $s$ and $m$.

For an infinite sequence $s=(s_n)_{n\in\w}\in\w^\w$ and a natural number $l\in\w$ let $s{\restriction}_l=(s_0,\dots,s_{l-1})$ be the restriction of the function $s:\w\to\w$ to the subset $l=\{0,\dots,l-1\}$.
Observe that the topology of the space $\IR^\w$ is generated by the metric $$\rho(x,y)=\max_{n\in\w}\frac{|x(n)-y(n)|}{2^n},\;\;x,y\in\IR^\w,$$and hence for every $z\in\IR^\w$ and $n\in\w$ the set $U(z{\restriction}_n)=\{x\in\IR^\w:\max_{i\in n}|x(i)-z(i)|\le 2^{-n}\}$ coincides with the closed ball $B[z;2^{-n}]=\{x\in\IR^\w:\rho(x,z)\le 2^{-n}\}$ of radius $2^{-n}$ centered at~$z$.

Using Lemma~\ref{l1}, choose a sequence $(\Xi_m)_{m\in\w}$ of positive numbers and a sequence $(T_s)_{s\in\w^{<\w}}$ of thick sets in the real line $\IR$ such that for every positive integer number $m$, finite set $F\subseteq \w^{<\w}$ of cardinality $|F|\le m$, function $\lambda:F\to [-m,m]\setminus\big[-\tfrac1m,\tfrac1m\big]$, and numbers $z_s\in T_s$, $s\in F$, such that $\{z_s\}_{s\in F}\not\subseteq[-\Xi_m,\Xi_m]$ we have $\big|\sum_{s\in F}\lambda(s)\cdot z_s\big|\ge m+1$.

For every $s\in\w^{<\w}$ and $n\in\w$ choose any point $t_{s,n}\in T_s\setminus[-n,n]$ and observe that the set $T_{s,n}=T_s\setminus(\{t_{s,n}\}\cup[-n,n])$ remains thick in $\IR$.

By induction on the tree $\w^{<\w}$ we shall construct a sequence $(z_s)_{s\in\w^{<\w}}$ of points of $\IR^\w$ and a sequence $(l_s)_{s\in\w^{<\w}}$ of finite ordinals satisfying the following conditions for every $s\in \w^{<\w}$:
\begin{itemize}
\item[$(1_s)$] $U(z_{s\hat{\;} i}{\restriction}_{l_{s\hat{\;} i}})\cap U(z_{s\hat{\;} j}{\restriction}_{l_{s\hat{\;} j}})=\emptyset$ for any distinct numbers $i,j\in\w$;
\item[$(2_s)$] $l_{s\hat{\;} i}> l_s+i$ for every $i\in\w$;
\item[$(3_s)$] the closure of the set $\{z_{s\hat{\;}i}\}_{i\in\w}$ contains the Haar-open set\\ $\mathbb T_s=U(z_s{\restriction}_{l_s})\cap(\IR^{l_s}\times\prod_{n\ge l_s}T_{s,n})$ and is contained in the set\\ $\hat{\mathbb T}_s=U(z_s{\restriction}_{l_s})\cap\big(\IR^{l_s}\times \prod_{n\ge l_s}(T_{s,n}\cup\{t_{s,n}\})\big)\subseteq U(z_s{\restriction}_{l_s})$.
\end{itemize}
We start the inductive construction letting $z_0=0$ and $l_0=0$. Assume that for some $s\in\w^{<\w}$, a point $z_s\in\IR^\w$ and a number $l_s\in\w$ have been constructed. Consider the Haar-open closed sets $\mathbb T_s$ and $\hat\IT_s$ defined in the condition $(3_s)$. Since $\mathbb T_s$ is nowhere dense in $\hat{\mathbb T}_s$, we can find a sequence $(z_{s\hat{\;}i})_{i\in\w}$ of pairwise distinct points of $\hat{\mathbb T}_s$ such that the space $D_s=\{z_{s\hat{\;}i}\}_{i\in\w}$ is discrete and contains $\mathbb T_s$ in its closure. Since $D_s$ is discrete, for every $i\in\w$ we can choose a number $l_{s\hat{\;}i}>l_s+i$ such that the balls $U(z_{s\hat{\;}i}{\restriction}_{l_{s\hat{\;}i}})$, $i\in\w$, are pairwise disjoint. Observing that the sequences $(z_{s\hat{\;}i})_{i\in\w}$ and $(l_{s\hat{\;}i})_{i\in\w}$ satisfy the conditions $(1_s)$--$(3_s)$, we complete the inductive step.

We claim that the $G_\delta$-subset $$P=\bigcap_{n\in\w}\bigcup_{s\in\w^n}U(z_s{\restriction}_{l_s})=\bigcup_{s\in\w^\w}\bigcap_{n\in\w}U(z_{s{\restriction}n}{\restriction}_{l_{s{\restriction}n}})$$ of $\IR^\w$ has required properties. First observe that the map $h:\w^\w\to P$ assigning to each infinite sequence $s\in\w^\w$ the unique point $z_s$ of the intersection $\bigcap_{n\in\w}U(z_{s{\restriction}n}{\restriction}_{l_{s{\restriction}n}})$ is a~homeomorphism of $\w^\w$ onto $P$. Then the inverse map $h^{-1}:P\to\w^\w$ is a~homeomorphism too.

\begin{claim}\label{cl:Ho} For every nonempty open subset $U\subseteq P$ its closure $\overline{U}$ in $\IR^\w$ is Haar-open in $\IR^\w$.
\end{claim}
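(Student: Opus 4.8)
The plan is to reduce the Haar-openness of $\overline U$ to the Haar-openness of one of the basic closed sets $\IT_s$ appearing in condition $(3_s)$, and then to verify directly that each $\IT_s$ is Haar-open as a consequence of the thickness of the sets $T_{s,n}$. First I would observe that $P$ carries, via the homeomorphism $h:\w^\w\to P$, the usual basis of clopen sets, so a nonempty open $U\subseteq P$ contains a basic clopen set of the form $P\cap U(z_s{\restriction}_{l_s})$ for a suitable $s\in\w^{<\w}$; since enlarging $U$ only enlarges $\overline U$, it suffices to prove the claim for $U=P\cap U(z_s{\restriction}_{l_s})$. The next step is to identify $\overline U$ (closure in $\IR^\w$) with $\hat\IT_s$, or at least to sandwich it between $\IT_s$ and $\hat\IT_s$: by construction the points $z_{s'}$ for $s'$ extending $s$ are dense in $U$, their closures run through the nested sets $\hat\IT_{s'}$, and an inductive argument using $(1_{s'})$--$(3_{s'})$ shows $\bigcap_n\bigcup_{|t|=n,\ t\supseteq s}U(z_t{\restriction}_{l_t})$ has $\IR^{l_s}$-projection dense in $\prod_{n\ge l_s}(T_{s,n}\cup\{t_{s,n}\})$ up to the first coordinates, so that $\IT_s\subseteq\overline U\subseteq\hat\IT_s$. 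Since $\hat\IT_s$ differs from $\IT_s$ only by allowing, on each coordinate $n\ge l_s$, the single extra value $t_{s,n}$, and both are closed, it is enough to show $\IT_s$ (equivalently $\hat\IT_s$) is Haar-open.

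For the Haar-openness of $\IT_s=U(z_s{\restriction}_{l_s})\cap(\IR^{l_s}\times\prod_{n\ge l_s}T_{s,n})$, I would argue as follows. Fix a compact set $K\subseteq\IR^\w$ and a point $p\in K$. Compactness gives, for each $n$, a bounded interval $[-c_n,c_n]$ with $K\subseteq\prod_n[-c_n,c_n]$. Because each $T_{s,n}$ is thick in $\IR$, for every $n\ge l_s$ there is $x_n\in\IR$ with $[-c_n,c_n]\subseteq x_n+T_{s,n}$; on the finitely many coordinates $n<l_s$ I just set $x_n:=z_s(n)-p(n)$ so that the shifted point has the right first $l_s$ coordinates to lie in $U(z_s{\restriction}_{l_s})$. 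Putting $x=(x_n)_{n\in\w}$, one gets $K\subseteq x+\IT_s$ after also shrinking to guarantee the $U(z_s{\restriction}_{l_s})$-constraint near $p$ — more carefully, one chooses $x$ so that $p-x$ has its first $l_s$ coordinates equal to $z_s{\restriction}_{l_s}$ and all later coordinates in the respective $T_{s,n}$, which makes $K\cap(\IT_s+x)$ a relative neighborhood of $p$ in $K$ (in fact all of $K$ near $p$). This is exactly the definition of Haar-openness, so $\IT_s$ is Haar-open, hence so is $\hat\IT_s\supseteq\IT_s$, and therefore so is $\overline U$.

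I expect the main obstacle to be the bookkeeping in the second step: showing precisely that the closure of $U=P\cap U(z_s{\restriction}_{l_s})$ in $\IR^\w$ equals (or is squeezed between) $\IT_s$ and $\hat\IT_s$. The subtlety is that $P$ is built by an infinite tree recursion, and a point of $\overline U$ is a limit of points $z_t$ with $|t|\to\infty$; one must track how the constraints $U(z_t{\restriction}_{l_t})$ tighten along a branch and use $(3_{t})$ at each level to see that every coordinate $\ge l_s$ of a limit point lands in $\overline{T_{s,n}}=T_{s,n}$ (these sets being closed, being countable unions of closed intervals) or in $\{t_{s,n}\}$, while no other values can be approached because of the disjointness $(1_t)$ and the growth $(2_t)$ of the $l_t$'s, which forces coordinatewise convergence. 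Once this containment is pinned down, the rest is the routine thickness computation sketched above, and the appeal to Proposition~\ref{Ho} then converts "Haar-open" into the negation of "Haar-meager" when this claim is later used.
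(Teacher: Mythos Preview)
Your overall strategy---find $s$ with $P\cap U(z_s{\restriction}_{l_s})\subseteq U$, show $\IT_s\subseteq\overline U$, and use that $\IT_s$ is Haar-open---matches the paper's. But your execution of the key inclusion $\IT_s\subseteq\overline U$ is both harder than necessary and partly mistaken. The upper bound $\overline U\subseteq\hat\IT_s$ you aim for is \emph{false}: for an infinite branch $\sigma\supset s$, the coordinates $z_\sigma(n)$ with $n\ge l_{s\hat{\;}\sigma(|s|)}$ are (close to) elements of $T_{\sigma{\restriction}m}$ for $m>|s|$, and these sets are pairwise disjoint from $T_s$ by construction, so typically $z_\sigma\notin\hat\IT_s$. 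Fortunately this upper bound is never used; only $\IT_s\subseteq\overline U$ matters, since Haar-openness passes to supersets.

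For that lower inclusion the paper avoids your ``main obstacle'' entirely. Instead of a tree induction, it uses $(3_s)$ directly at a \emph{single} level: the set $\{z_{s\hat{\;}i}\}_{i\in\w}$ already has $\IT_s$ in its closure, and for each $i$ one picks any point $y_{s\hat{\;}i}\in P\cap U(z_{s\hat{\;}i}{\restriction}_{l_{s\hat{\;}i}})\subseteq U$; since $\rho(z_{s\hat{\;}i},y_{s\hat{\;}i})\le 2^{1-l_{s\hat{\;}i}}\le 2^{1-i}\to 0$, the closure of $\{y_{s\hat{\;}i}\}_{i\in\w}\subseteq U$ still contains $\IT_s$. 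This two-line approximation replaces all of the bookkeeping you anticipated. Your direct verification that $\IT_s$ is Haar-open (via coordinatewise thickness of the $T_{s,n}$ and translation on the first $l_s$ coordinates) is correct and actually fills in a point the paper merely asserts.
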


\begin{proof} Pick any point $p\in U$ and find a unique infinite sequence $t\in\w^\w$ such that $\{p\}=\bigcap_{m\in\w}U(z_{t{\restriction}m}{\restriction}_{l_{t{\restriction}m}})$. Since the family $\{U(z_{t{\restriction}m}{\restriction}_{l_{t{\restriction}m}})\}_{m\in\w}$ is a neighborhood base at $p$, there is $m\in\w$ such that $P\cap U(z_{t{\restriction}m}{\restriction}_{l_{t{\restriction}m}})\subseteq U$. Consider the finite sequence $s=t{\restriction}_m$. By the definition of $P$, for every $i\in\w$ the intersection $P\cap U(z_{s\hat{\;}i}{\restriction}_{l_{s\hat{\;}i}})$ contains some point $y_{s\hat{\;}i}$. Taking into account that $$\rho(z_{s\hat{\;}i},y_{s\hat{\;}i})\le\diam\, U(z_{s\hat{\;}i}{\restriction}_{l_{s\hat{\;}i}})\le 2^{1-l_{s\hat{\;}i}}\le 2^{1-i}$$ and $\mathbb T_s$ is contained in the closure of the set $\{z_{s\hat{\;}i}\}_{i\in\w}$, we conclude that the Haar-open set $\IT_s$ is contained in the closure of the set $\{y_{s\hat{\;}i}\}_{i\in\w}\subseteq P\cap U(z_s{\restriction}_{l_s})\subseteq U$, which implies that $\overline{U}$ is Haar-open.
\end{proof}

\begin{claim} The set $P$
cannot be covered by less than $\cov(\M)$ many closed Haar-meager sets in $\IR^\w$.
\end{claim}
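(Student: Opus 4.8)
The plan is to derive a contradiction from the assumption that $P\subseteq\bigcup_{\alpha<\kappa}F_\alpha$ for some family $(F_\alpha)_{\alpha<\kappa}$ of closed Haar-meager subsets of $\IR^\w$ with $\kappa<\cov(\M)$. Since $h^{-1}:P\to\w^\w$ is a homeomorphism, it suffices to show that each set $P\cap F_\alpha$ is nowhere dense in $P$: in that case $\big(h^{-1}[P\cap F_\alpha]\big)_{\alpha<\kappa}$ is a cover of $\w^\w=h^{-1}[P]=\bigcup_{\alpha<\kappa}h^{-1}[P\cap F_\alpha]$ by fewer than $\cov(\M)$ nowhere dense sets, which is impossible by the very definition of $\cov(\M)$ as the least cardinality of a cover of $\w^\w$ by nowhere dense subsets. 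Each $P\cap F_\alpha$ is closed in $P$ (being the trace on $P$ of a closed subset of $\IR^\w$), so it is enough to rule out that $P\cap F_\alpha$ has nonempty interior in $P$.

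First I would record the easy monotonicity of Haar-openness: if $A\subseteq B\subseteq\IR^\w$ and $A$ is Haar-open in $\IR^\w$, then $B$ is Haar-open in $\IR^\w$, because for any compact $K\subseteq\IR^\w$ with a point $p\in K$ and any $x$ for which $K\cap(A+x)$ is a neighborhood of $p$ in $K$, the larger set $K\cap(B+x)\supseteq K\cap(A+x)$ is again such a neighborhood. Now suppose, towards a contradiction, that $P\cap F_\alpha$ has nonempty interior in $P$; being closed in $P$, it then contains a nonempty open subset $U$ of $P$. By Claim~\ref{cl:Ho}, the closure $\overline U$ of $U$ in $\IR^\w$ is Haar-open in $\IR^\w$. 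Since $U\subseteq F_\alpha$ and $F_\alpha$ is closed in $\IR^\w$, we get $\overline U\subseteq F_\alpha$, and hence by monotonicity $F_\alpha$ itself is Haar-open in $\IR^\w$. This contradicts Proposition~\ref{Ho}, according to which a closed Haar-meager subset of $\IR^\w$ is not Haar-open. Therefore every $P\cap F_\alpha$ is nowhere dense in $P$, and the argument is complete.

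I do not expect a serious obstacle at this stage: the real work has already been packaged into the construction of $P$ and into Claim~\ref{cl:Ho}, which together encapsulate the Haar-openness of the closures of open pieces of $P$. The remaining points are bookkeeping --- the monotonicity of Haar-openness, the elementary observation that a relatively closed set with nonempty relative interior contains a nonempty relatively open set, and the transfer of the nowhere-density count across the homeomorphism $h$ --- together with the care needed to keep the two distinct ``smallness'' notions apart: Haar-meagerness and Haar-openness are taken in the ambient group $\IR^\w$, whereas nowhere density is taken inside the subspace $P$, and it is exactly Claim~\ref{cl:Ho} that bridges them.
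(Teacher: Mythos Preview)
Your proof is correct and follows essentially the same approach as the paper's: both argue that if fewer than $\cov(\M)$ closed Haar-meager sets covered $P$, some trace $P\cap F_\alpha$ would have nonempty interior in $P$, whence by Claim~\ref{cl:Ho} and Proposition~\ref{Ho} the closed set $F_\alpha\supseteq\overline{P\cap F_\alpha}$ would be Haar-open and hence not Haar-meager. You merely spell out a few points the paper leaves implicit---the monotonicity of Haar-openness and the explicit transfer of nowhere density through the homeomorphism $h$---but the logical skeleton is identical.
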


\begin{proof} To derive a contradiction, assume that $\mathcal H$ is a family of closed Haar-meager subsets of $\IR^\w$ such that $|\mathcal H|<\cov(\M)$ and $P\subseteq\bigcup\mathcal H$. By the definition of the cardinal $\cov(\M)$, for some set $H\in\mathcal H$ the intersection $P\cap H$ has nonempty interior in $P$. By Claim~\ref{cl:Ho}, the set $H\supset\overline{P\cap H}$ is Haar-open in $\IR^\w$ and by Theorem \ref{Ho}, $H$ is not Haar-meager in $\IR^\w$.
\end{proof}

It remains to prove that the linear subspace $L\subseteq\IR^\w$ generated by the $G_\delta$-set $P$ is Borel and meager in $\IR^\w$.

We recall that by $h:\w^\w\to P$ we denote the homeomorphism assigning to each infinite sequence $s\in\w^\w$ the unique element $z_s$ in the intersection $\bigcap_{m\in\w}U(z_{s{\restriction}m}{\restriction}_{l_{s{\restriction}m}})$.

For every finite subset $F\subseteq\w^{<\w}$ let $l_F:=\max_{s\in F} l_s$.
For a finite subset $E\subseteq \w^\w$ let $r_E\in\w$ be the smallest number such that the restriction map $E\to\w^{r_E}$, $s\mapsto s{\restriction}_{r_E}$, is injective.
Let $l_E:=\max\{l_{s{\restriction}_{r_E}}:s\in E\}$. Taking into account that $l_{s{\restriction}_m}\ge m$ for every $s\in\w^\w$ and $m\in\w$, we conclude that $l_E\ge r_E$.

\begin{claim}\label{cl2} For every $m\in\IN$, nonempty finite set $E\subseteq \w^\w$ of cardinality $|E|\le m$,  function $\lambda:E\to [-m,m]\setminus\big[{-}\tfrac1m,\tfrac1m\big]$, the linear combination $\sum_{s\in E}\lambda(s)\cdot z_s$ belongs to the closed nowhere dense subspace $$\bigcap_{k>\max\{l_E,\Xi_m\}}\{y\in\IR^\w:|y(k)|\ge 1\}$$
of $\IR^\w$. 
\end{claim}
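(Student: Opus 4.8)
The plan is to unwind the definitions of $r_E$, $l_E$, and $z_s$ and then apply the arithmetic guarantee built into the choice of the sets $(T_s)_{s\in\w^{<\w}}$ and the numbers $(\Xi_m)_{m\in\w}$. First I would fix $m$, $E$, and $\lambda$ as in the statement, and set $r=r_E$, $l=l_E$. For each $s\in E$ write $s'=s{\restriction}_r\in\w^r$; since $s\mapsto s{\restriction}_r$ is injective on $E$, the finite set $F=\{s':s\in E\}\subseteq\w^{<\w}$ has cardinality $|F|=|E|\le m$, and transporting $\lambda$ along this bijection gives a function $\mu:F\to[-m,m]\setminus[-\tfrac1m,\tfrac1m]$. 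The key point is to identify, for each coordinate $k$, the value $z_s(k)$ in terms of the sets $T_{s'}$. By conditions $(3_s)$ and the construction of $P$, the point $z_s=h(s)$ lies in $U(z_{s'}{\restriction}_{l_{s'}})\cap(\IR^{l_{s'}}\times\prod_{n\ge l_{s'}}(T_{s',n}\cup\{t_{s',n}\}))$; in particular, for every coordinate $k\ge l_{s'}$ we have $z_s(k)\in T_{s',n}\cup\{t_{s',n}\}\subseteq T_{s'}$ for $n=k$. So for $k\ge l\ge l_{s'}$ (the inequality $l\ge l_{s'}$ holding for all $s\in E$ by definition of $l_E$) we get $z_s(k)\in T_{s'}$ for every $s\in E$.

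Next I fix a coordinate $k>\max\{l_E,\Xi_m\}$ and aim to show $\big|\sum_{s\in E}\lambda(s)\,z_s(k)\big|\ge 1$, which after rescaling is exactly the statement. Set $x_{s'}=z_s(k)\in T_{s'}$ for $s\in E$; these are points of the pairwise distinct thick sets $T_{s'}$, $s'\in F$. To invoke the chosen property of $(T_s)$ and $(\Xi_m)$ — namely that for $|F|\le m$, $\mu:F\to[-m,m]\setminus[-\tfrac1m,\tfrac1m]$, and points $z_s\in T_s$ with $\{z_s\}_{s\in F}\not\subseteq[-\Xi_m,\Xi_m]$ one has $\big|\sum_{s\in F}\mu(s)z_s\big|\ge m+1$ — I need $\{x_{s'}:s'\in F\}\not\subseteq[-\Xi_m,\Xi_m]$. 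This is where the removal of the bounded pieces from the $T_{s,n}$ pays off: since $k>l_E\ge l_{s'}$ and, more to the point, $k>\Xi_m$, I would argue that the $k$-th coordinate of any point of $\hat{\mathbb T}_{s'}$ beyond index $l_{s'}$ lies in $T_{s',k}\cup\{t_{s',k}\}=\big(T_{s'}\setminus(\{t_{s',k}\}\cup[-k,k])\big)\cup\{t_{s',k}\}$, hence either equals $t_{s',k}\notin[-k,k]$ or lies in $T_{s'}\setminus[-k,k]$; either way $|x_{s'}|>k>\Xi_m$. Thus every $x_{s'}$, not just one of them, lies outside $[-\Xi_m,\Xi_m]$, so certainly $\{x_{s'}:s'\in F\}\not\subseteq[-\Xi_m,\Xi_m]$.

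With all hypotheses verified, the chosen property gives $\big|\sum_{s\in E}\lambda(s)\,z_s(k)\big|=\big|\sum_{s'\in F}\mu(s')x_{s'}\big|\ge m+1\ge 1$. Since $k>\max\{l_E,\Xi_m\}$ was arbitrary, the linear combination $y=\sum_{s\in E}\lambda(s)\,z_s$ satisfies $|y(k)|\ge 1$ for all such $k$, i.e. $y$ belongs to $\bigcap_{k>\max\{l_E,\Xi_m\}}\{y\in\IR^\w:|y(k)|\ge 1\}$. Finally I would note that this set is closed (each $\{y:|y(k)|\ge 1\}$ is closed in the product topology, being determined by a single continuous coordinate) and nowhere dense: any basic open set constrains only finitely many coordinates, so it meets the open set $\{y:|y(k_0)|<1\}$ for $k_0$ large, which is disjoint from our set.

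The main obstacle I anticipate is the bookkeeping in the previous paragraph's middle step — correctly matching the "coordinate index" $k$ with the parameter "$n$" in the sets $T_{s,n}$, and making sure that the inequality $k>l_E$ really does force $k>l_{s'}$ so that coordinate $k$ of $z_s$ is governed by $(3_{s'})$ rather than having been fixed at an earlier stage of the induction. One must also be slightly careful that the same point $x_{s'}=z_s(k)$ is simultaneously a legitimate choice of "point in $T_{s'}$" and lies outside $[-\Xi_m,\Xi_m]$; both follow from $z_s\in\hat{\mathbb T}_{s'}$ together with $k>\Xi_m$, but the two facts must be extracted from that single membership. Once this is laid out cleanly, the rest is a direct substitution into Lemma~\ref{l1}'s consequence as recorded in the construction.
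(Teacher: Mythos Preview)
There is a genuine gap. Your central claim is that $z_s=h(s)$ lies in $\hat{\mathbb T}_{s'}=U(z_{s'}{\restriction}_{l_{s'}})\cap\big(\IR^{l_{s'}}\times\prod_{n\ge l_{s'}}(T_{s',n}\cup\{t_{s',n}\})\big)$, where $s'=s{\restriction}_{r_E}$, and hence that $z_s(k)\in T_{s'}$ exactly for every $k\ge l_{s'}$. But condition $(3_{s'})$ only places the \emph{next-level} points $z_{s'\hat{\;}i}$ (and the closure of that countable set) inside $\hat{\mathbb T}_{s'}$; it says nothing about the limit point $z_s$ for an infinite $s$. Membership in the ball $U(z_{s'}{\restriction}_{l_{s'}})$ only controls coordinates below $l_{s'}$, while coordinates $k\ge l_{s'}$ of $z_s$ are fixed much later in the induction. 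Concretely, if $m_s$ is the unique index with $l_{s{\restriction}_{m_s}}\le k<l_{s{\restriction}_{m_s+1}}$, then $z_{s{\restriction}_{m_s+1}}(k)\in T_{s{\restriction}_{m_s},k}\cup\{t_{s{\restriction}_{m_s},k}\}\subseteq T_{s{\restriction}_{m_s}}$, and for $m_s>r_E$ this thick set is \emph{disjoint} from $T_{s'}$. Moreover $z_s(k)$ is not equal to $z_{s{\restriction}_{m_s+1}}(k)$ but only within $2^{-l_{s{\restriction}_{m_s+1}}}$ of it. So neither the exact membership nor the choice of thick set in your argument is correct.

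The paper's proof repairs both issues at once: for each $k$ it passes to the $k$-dependent level $m_s\ge r_E$, applies the Lemma~\ref{l1} property to the exact values $z_{s{\restriction}_{m_s+1}}(k)\in T_{s{\restriction}_{m_s}}\setminus[-k,k]\subseteq T_{s{\restriction}_{m_s}}\setminus[-\Xi_m,\Xi_m]$ (the finite sequences $s{\restriction}_{m_s}$ are still pairwise distinct since $m_s\ge r_E$), obtaining $\big|\sum_{s\in E}\lambda(s)\,z_{s{\restriction}_{m_s+1}}(k)\big|\ge m+1$, and then absorbs the approximation error via the triangle inequality to get $|y(k)|\ge m+1-|E|\ge 1$. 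Your outline misses exactly this two-step structure (right level, then error estimate); once you insert it, the remainder of your write-up---the injectivity bookkeeping via $r_E$, the ``outside $[-\Xi_m,\Xi_m]$'' check, and the closed/nowhere dense remark---is fine.
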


\begin{proof} Given any number $k>\max\{l_E,\Xi_m\}$, consider the projection $\pr_k:\IR^\w\to \IR$, $\pr_k:x\mapsto x(k)$, to the $k$-th coordinate. The claim will be proved as soon as we check that the element $y=\sum_{s\in E}\lambda(s)\cdot z_s$ satisfies $|\pr_k(y)|\ge 1$. Observe that $\pr_k(y)=\sum_{s\in F}\lambda(s)\cdot\pr_k(z_s)$. For every $s\in F$ the equality $\{z_s\}=\bigcap_{m\in\w}U(z_{s{\restriction}m}{\restriction}_{l_{s{\restriction}m}})$ implies that
$|\pr_k(z_s)-\pr_k(z_{s{\restriction}_m})|\le 2^{-l_{s{\restriction}_m}}\le 1$ for any $m\in\w$ such that $l_{s{\restriction}_m}>k$.

 Taking into account that $k>l_E=\max_{s\in E}l_{s{\restriction}_{r_E}}$, for every $s\in E$ we can find a~number $m_s\ge r_E$ such that $k\in[l_{s{\restriction}_{m_s}},l_{s{\restriction}_{m_s+1}})$.
Then
$$
\pr_k(z_{s{\restriction}_{m_s{+}1}}{\restriction}_{l_{s{\restriction}_{m_s{+}1}}})\in T_{s{\restriction}_{m_s},k}\cup\{t_{s{\restriction}_{m_s},k}\}
\subseteq T_{s{\restriction}_{m_s}}\setminus [-k,k]\subseteq T_{s{\restriction}_{m_s}}\!\setminus [-\Xi_m,\Xi_m]
$$
 by the condition $(3_{s{\restriction}_{m_s}})$ of the inductive construction.

It follows from $r_E\le\min_{s\in F}m_s$ that the sequences
$s{\restriction}_{m_s}$, $s\in E$, are pairwise distinct.
Then $$\Big|\sum_{s\in E}\lambda(s)\cdot\pr_k(z_{s{\restriction}_{m_s{+}1}}{\restriction}_{l_{s{\restriction}_{m_s{+}1}}})\Big|\ge m+1$$ by the choice of the family $(T_s)_{s\in\w^{<\w}}$. Finally,
$$
\begin{aligned}
&|\pr_k(y)|=\Big|\sum_{s\in E}\lambda(s)\cdot\pr_k(z_s)\Big|\ge\\
&\ge\Big |\sum_{s\in E}\lambda(s)\cdot\pr_k(z_{s{\restriction}_{m_s{+}1}}{\restriction}_{l_{s{\restriction}_{m_s{+}1}}})\Big|-\Big|\sum_{s\in E}\lambda(s)\cdot(\pr_k(z_{s{\restriction}_{m_s{+}1}}{\restriction}_{l_{s{\restriction}_{m_s{+}1}}})-\pr_k(z_s))\Big|\\
&\ge m+1-\sum_{s\in E}|\lambda(s)|\cdot\big|\pr_k(z_{s{\restriction}_{m_s{+}1}}{\restriction}_{l_{s{\restriction}_{m_s{+}1}}})-\pr_k(z_s)\big|\ge m+1-|E|\ge 1.
\end{aligned}
$$
\end{proof}

Claim~\ref{cl2} implies that the set $P$ is linearly independent and the linear hull $L$ of $P$ in $\IR^\w$ is meager. It remains to prove that the linear subspace $L$ is a Borel subset of $\IR^\w$. This follows from Proposition~\ref{p:hull} below.
\end{proof}

A topological space $X$ is called {\em Borel} if $X$ admits a stronger Polish topology. By the classical Lusin--Suslin Theorem \cite[15.1]{K}, a subset $B$ of a Polish space is Borel if and only if $B$ is a Borel space. 

\begin{proposition}\label{p:hull} The linear hull of any linearly independent Borel subspace of a topological vector space is Borel.
\end{proposition}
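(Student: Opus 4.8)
The plan is to stratify the linear hull $L$ of $P$ by the number of nonzero coordinates of a vector, to realize each stratum as a continuous injective image of a Borel subset of a Polish space, and then to glue the strata together by a topological sum into a Polish topology on $L$ that refines the one inherited from the ambient space $V$. To set up, I would use the hypothesis that $P$ is a Borel space to fix a Polish topology $\tau$ on $P$ that is finer than the subspace topology from $V$, so that the identity inclusion $\iota\colon(P,\tau)\to V$ is continuous. For $n\in\w$ let $L_n\subseteq L$ be the set of all vectors $\sum_{i=1}^n\lambda_ip_i$ with $\lambda_1,\dots,\lambda_n\in\IR\setminus\{0\}$ and with $p_1,\dots,p_n\in P$ pairwise distinct. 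Linear independence of $P$ immediately gives that $L_0=\{0\}$, that the sets $(L_n)_{n\in\w}$ are pairwise disjoint with union $L$, and that for $n\ge1$ the representation $\sum_i\lambda_ip_i$ of a vector of $L_n$ is unique up to a simultaneous permutation of the $\lambda_i$ and the $p_i$.

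To break the permutation ambiguity I would fix a Borel linear order $\prec$ on $P$; such an order exists because $(P,\tau)$ is a standard Borel space, hence admits a Borel injection into $\IR$, and one may pull back the usual order of $\IR$. For $n\ge1$ the product $X_n:=(\IR\setminus\{0\})^n\times(P,\tau)^n$ is Polish, and $B_n:=\{(\lambda_1,\dots,\lambda_n,p_1,\dots,p_n)\in X_n:p_1\prec\cdots\prec p_n\}$ is a Borel subset of $X_n$ (being a finite intersection of preimages of the Borel relation $\prec$ under continuous coordinate maps). Consider $\Phi_n\colon B_n\to V$ defined by $\Phi_n(\lambda_1,\dots,\lambda_n,p_1,\dots,p_n)=\sum_{i=1}^n\lambda_i\cdot\iota(p_i)$. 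It is continuous, since $\iota$ and the vector operations of $V$ are continuous; its image is exactly $L_n$, because any representation of a vector of $L_n$ can be reordered so that its $P$-entries are $\prec$-increasing; and it is injective, because two $\prec$-increasing tuples with the same image must coincide, by the uniqueness recorded above. Thus $\Phi_n$ is a continuous bijection of the Borel set $B_n$ onto $L_n$.

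It remains to transport Polish topologies and form a topological sum. By the Lusin--Suslin Theorem \cite[15.1]{K}, each $B_n$ is a Borel space, so I may fix a Polish topology $\tau_n$ on $B_n$ finer than the one inherited from $X_n$; then $\Phi_n\colon(B_n,\tau_n)\to V$ remains continuous, and pushing $\tau_n$ forward along the bijection $\Phi_n$ endows $L_n$ with a Polish topology that is finer than the subspace topology $L_n$ inherits from $V$ (since $\Phi_n$ is continuous into $V$, hence into $L_n$ with its subspace topology). Giving $L_0=\{0\}$ its unique topology and equipping $L=\bigsqcup_{n\in\w}L_n$ with the topology of the topological sum yields a Polish topology on $L$, as a countable topological sum of Polish spaces is Polish. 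This topology is finer than the subspace topology of $L$ in $V$: for any open $W\subseteq V$ the set $W\cap L_n$ is open in the $V$-topology of $L_n$, hence in the finer topology $\tau_n$, so $W\cap L=\bigsqcup_{n\in\w}(W\cap L_n)$ is open in the sum. Therefore $L$ admits a stronger Polish topology, i.e.\ $L$ is Borel.

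I do not expect a genuine obstacle here. The two places that call for a little care are the production of the Borel linear order $\prec$ (from the standard Borel structure of $P$) and the verification that the sum topology on $L$ is finer than the ambient one, which comes down to the stratum-by-stratum refinement arranged in the previous step. The essential use of linear independence is exactly that it forces the strata to be disjoint and each $\Phi_n$ to be injective; without it the multiset of coefficient--vector pairs representing a given vector would not be determined, and the argument would fail.
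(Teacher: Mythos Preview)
Your proposal is correct and follows essentially the same strategy as the paper's proof: stratify the linear hull by the number of summands, use a linear order to break the permutation symmetry, and exhibit the hull as a continuous bijective image of a countable topological sum of Polish spaces. The paper obtains its order by first refining $P$ to a zero-dimensional Polish space embedded in $\IR$ (so that the set of increasing tuples is open rather than merely Borel and no second refinement is needed), but this is only a cosmetic simplification of your argument.
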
 

\begin{proof} Let $B$ be a linearly independent Borel subspace of a topological vector space $E$. Being Borel, the space $B$ is the image of a Polish space $Z$ under a continuous bijective map $\xi:Z\to B$. By \cite[13.7]{K}, each Polish space admits a stronger zero-dimensional Polish topology. So we can assume that the Polish space $Z$ is zero-dimensional and hence can be identified with a $G_\delta$-subset of the real line. In this case $Z$ carries a linear order inherited from the real line. For every $n\in\w$ consider the open subset $Z^n_{<}=\big\{z\in Z^n:\forall i,j\in n\;\;\big(i<j\;\Ra\; z(i)<z(j)\big)\big\}$ in $Z^n$. Let $\IR_{\ne0}=\IR\setminus\{0\}$. By the linear independence of the set $B$ and the bijectivity of the function $\xi:Z\to B$, the continuous function
$$\Sigma_n:\IR^n_{\ne0}\times Z^n_<\to E,\quad \Sigma_n:(\lambda,z)\mapsto\sum_{i\in n}\lambda(i)\cdot\xi\big(z(i)\big),$$
is bijective. Consider the topological sum $$S=\bigoplus_{n\in\w}\big(\IR_{\ne0}^n\times Z^n_{<}\big)$$ and the continuous map 
$\Sigma:S\to E$ assigning to each pair $(\lambda,z)\in S$ the element $\Sigma_n(\lambda,z)$ where $n\in\w$ is a unique number such that $(\lambda,z)\in\IR_{\ne0}^n\times Z^n_<$. It is clear that the image $\Sigma[S]$ of $S$ under the map $\Sigma$ coincides with the linear hull $L$ of the set $S$ in $E$.  The linear independence of the set $B$ guarantees that the map $\Sigma:S\to L$ is bijective. Since the space $S$ is Polish, the space $L$ is Borel.
\end{proof}

\section{Acknowledgements} The research of
E.~Jab{\l}o\'{n}ska was partially supported
by the Faculty of Applied Mathematics AGH UST statutory tasks and dean grant within subsidy of Ministry
of Science and Higher Education.
 \newpage

\end{document}